\numberwithin{equation}{subsection}
\numberwithin{subsection}{section}
\newenvironment{enumeratei}
{\begin{enumerate}[\upshape (i)]}
{\end{enumerate}}
\newtheorem*{namedtheorem}{\theoremname}
\newcommand{\theoremname}{testing}
\theoremstyle{plain}
\newtheorem{thm}{Theorem}[section]
\newtheorem{proposition}[thm]{Proposition}
\newtheorem{proposition-definition}[thm]{Proposition-Definition}
\newtheorem{lemma-definition}[thm]{Lemma-Definition}
\newtheorem{corollary}[thm]{Corollary}
\newtheorem{lemma}[thm]{Lemma}
\theoremstyle{definition}
\newtheorem{definition}[thm]{Definition}
\newtheorem{notation}[thm]{Notation}
\newtheorem{assumption}[thm]{Assumption}
\theoremstyle{remark}
\numberwithin{thm}{section}
\newcommand\tC{\tilde{C}}
\newcommand\cA{\mathcal{A}}
\newcommand\cC{\mathcal{C}}
\newcommand\cM{\mathcal{M}}
\newcommand\cO{\mathcal{O}}
\newcommand\cR{\mathcal{R}}
\newcommand\cU{\mathcal{U}}
\newcommand\cX{\mathcal{X}}
\def\O{\mathcal{O}}
\def\P{\mathbb{P}}
\def\A{\mathbb{A}}
\def\C{\mathbb{C}}
\def\o{\mathfrak{o}}
\def\u{\underline}
\newcommand\uB{\underline{B}}
\newcommand\uC{\underline{C}}
\newcommand\uD{\underline{D}}
\newcommand\uE{\underline{E}}
\newcommand\uF{\underline{F}}
\newcommand\uf{\underline{f}}
\newcommand\uH{\underline{H}}
\newcommand\uL{\underline{L}}
\newcommand\uM{\underline{M}}
\newcommand\uS{\underline{S}}
\newcommand\uU{\underline{U}}
\newcommand\uV{\underline{V}}
\newcommand\uX{\underline{X}}
\newcommand\uY{\underline{Y}}
\newcommand\uZ{\underline{Z}}
\newcommand\CC{\mathbb{C}}
\newcommand\NN{\mathbb{N}}
\newcommand\PP{\mathbb{P}}
\newcommand\fM{\mathfrak{M}}
\newcommand\fro{\mathfrak{o}}
\newcommand\arr{\ifinner\to\else\longrightarrow\fi}
\def\displaytimes_#1{\mathrel{\mathop{\times}\limits_{#1}}}
\def\displayotimes_#1{\mathrel{\mathop{\bigotimes}\limits_{#1}}}
\newcommand\spec{\operatorname{Spec}}
\newcommand\Spec{\operatorname{Spec}}
\newcommand\doublelong[2]{\mathbin{\xymatrix{{}\ar@<3pt>[r]^{#1}
\ar@<-3pt>[r]_{#2}&}}}
\newlength{\ignora}
\renewcommand{\setminus}{\smallsetminus}
\numberwithin{equation}{subsection}
\newcommand{\tf}{{\tilde{f}}} 
\newcommand\uev{\underline{ev}} 
\newcommand\oM{\overline{M}} 
\newcommand{\lM}{\mathcal{A}}  
\begin{document}


\title{Strong Approximation over Function Fields}

\author{Qile Chen}

\author{Yi Zhu}

\address[Chen]{Department of Mathematics\\
Boston College\\
Chestnut Hill,\\
MA 02467-3806\\
U.S.A.}
\email{qile.chen@bc.edu}

\address[Zhu]{Pure Mathematics\\Univeristy of Waterloo\\Waterloo, ON N2L3G1\\ Canada}
\email{yi.zhu@uwaterloo.ca}

\thanks{Chen is supported by NSF grant DMS-1403271 and DMS-1560830.}

\date{\today}

\subjclass[2010]{Primary 	14G05, 14G25, Secondary 14M10}
\keywords{strong approximation, affine varieties, $\A^1$-curve, stable log map, complete intersection}

\begin{abstract}


By studying $\A^1$-curves on varieties, we propose a geometric approach to strong approximation problem over function fields of complex curves. We prove that strong approximation holds for smooth, low degree affine complete intersections with the boundary smooth at infinity.


\end{abstract}

\maketitle

\tableofcontents

\section{Introduction}\label{sec:intro}

Given a variety $X$ over a number field, the existence of rational points (integral points) and their distributions (weak approximation, strong approximation) are extensively studied by number theorists. In general, these problems are very difficult and lacking of complete solutions. 

The analogue between number fields and function fields of Riemann surfaces suggests that one may start with geometric function field case first. A great progress has been made along this direction on the arithmetics of {projective} varieties with lots of rational curves during the past fifteen years. Let $F$ be a function field of a Riemann surface, and consider a projective variety $X$ defined over $F$. If the geometric fiber is rationally connected, then $X$ admits rational points \cite{GHS,dJS1}. When the geometric fiber is rationally simply connected, weak approximation holds for $X$, see \cite{dJS,Hassett-WA} for the definitions and results. Furthermore, it is expected that weak approximation holds for rationally connected varieties \cite{KMM,HT06}. As the first example, all the geometric conditions above hold for projective spaces.

While the results above focus on the projective case, number theorists study arithmetics of open varieties such as linear algebraic groups and affine hypersurfaces as well. This paper is an attempt to build a parallel theory of integral points on open varieties over $F$ from the logarithmic geometry point of view.

Since affine spaces satisfies strong approximation \cite[Theorem 6.13]{Rosen2002}, it is natural to search geometric substitute for affine spaces that strong approximation holds. As the non-proper generalization of rationally connected varieties, $\A^1$-connected varieties have been studied in our previous works \cite{CZ, A1, rankone,Z3}. On one hand side, this program is influenced by Iitaka's philosphy and the works of Keel-McKernan \cite{KM} by studying the birational geometry of pairs. On the other hand side, we replace the non-proper smooth varieties by their logarithmically smooth (equivalently toroidal) compactifications, and view $\A^1$-curves as a morphism of logarithmic schemes. The theory of stable log maps developed in \cite{Chen, AC, GS} provides a frame work for the study of $\A^1$-curves. 

In this paper, we introduce the notion {\em $\A^1$-simple-connectedness} using the stable log map compactification. This is a key to our approach for the strong approximation conjecture over $F$. Our proposal is parallel to the approach of Hassett \cite{Hassett-WA} and de Jong-Starr \cite{dJS} for weak approximation of rationally simply connected varieties.



{\begin{thm}\label{thm:SA}
Let $F$ be a function field of a smooth, irreducible complex algebraic curve. Let $X=(\uX,\uD)$ be a log smooth projective variety over $F$. Assume the following hold:
\begin{enumerate}
\item $\uX$ satisfies weak approximation over $F$.
\item There exists a curve class $\beta$ and a geometrically irreducible component $\uM$ of the moduli space $\cA_2(X,\beta)$ of two pointed $\A^1$-curves defined over $F$ such that
\begin{itemize}
\item a general point of $\uM$ parametrizes a smoothly embeded $\A^1$-curve.
\item The evaluation $$ev:\uM\to \uX\times \uX$$
is dominant with rationally connected geometric generic fiber.
\end{itemize}

\end{enumerate}
Then strong approximation holds for the interior $\uX\backslash \uD$ over $F$.
\end{thm}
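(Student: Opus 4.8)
The plan is to recast strong approximation as the construction of a single global integral section of an integral model, to produce that section by rerouting a ``backbone'' section through $\A^1$-curves, and to globalize the rerouting using the rationally connected fibres of $\ev$ together with Graber--Harris--Starr. First I would fix a smooth projective model $B$ with $F=\C(B)$ and spread $X$ out to a projective model $\cX\supseteq\cD$ over a dense open of $B$, with interior $\cU=\cX\setminus\cD$ and fibres $\cX_w\cong\uX$. Unwinding the definition, strong approximation for $\uX\setminus\uD$ away from $S$ is equivalent to the following: for every finite set of places $\Sigma\supseteq S$, every order $N$, and every family of local jets $\sigma_v$ ($v\in\Sigma$) that are integral (land in $\cU$) at the places of $\Sigma\setminus S$, there must exist a global section $s\colon B\to\cX$ agreeing with each $\sigma_v$ to order $N$ and meeting $\cD$ only over $S$. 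Thus the target is one $S$-integral section realizing the prescribed jets, and the genuinely global content is that $s$ must avoid $\cD$ at \emph{all} places outside $S$.

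Next I would invoke hypothesis (1). Weak approximation for $\uX$ supplies a section $s_0\colon B\to\cX$ matching every $\sigma_v$ to order $N$. Since the jets over $\Sigma\setminus S$ lie in $\cU$, the section $s_0$ does not meet $\cD$ over $\Sigma\setminus S$; it is allowed to meet $\cD$ over $S$, and it meets $\cD$ over a finite ``bad'' set $W'$ of places automatically disjoint from $\Sigma$. The remaining task is therefore purely boundary-theoretic: to deform $s_0$, fixing its jets at $\Sigma$, into a section whose entire contact with $\cD$ is concentrated over $S$, clearing the crossings over $W'$.

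The core step is where the $\A^1$-curves enter, and it is what upgrades weak to strong approximation. The defining feature of an $\A^1$-curve in the class $\beta$ is that it meets $\uD$ only at its point at infinity, so a section assembled from $\A^1$-curve arcs meets $\cD$ only where those infinities sit. Concretely I would form a comb whose handle is $s_0$ and whose teeth are $\A^1$-curves in fibres $\cX_w$, attached so that, read as a stable log map, the comb's contact with $\cD$ away from $S$ becomes interior to the attached teeth; a log smoothing of the comb then trades each bad crossing of $s_0$ for a detour through an interior $\A^1$-curve and steers the total $\cD$-contact of the smoothed section over $S$. Because a general member of $\uM$ is a smoothly embedded $\A^1$-curve met at a general point, the teeth carry sufficiently positive normal bundles, the comb is unobstructed and smoothable, and — the teeth being supported off $\Sigma$ — the smoothing can be arranged to fix the jets at $\Sigma$ to order $N$.

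To carry out all these smoothings simultaneously and to certify that the output is genuinely $S$-integral, I would organize the teeth into an auxiliary fibration over $B$ modeled on $\uM$: the two interior marked points let one evaluation point be pinned to the prescribed anchor values of $s_0$ while the other varies, and the dominance of $\ev$ onto $\uX\times\uX$ makes the relevant attachment/connection points attainable. Hypothesis (2) gives this fibration a rationally connected geometric generic fibre, so Graber--Harris--Starr produces a section of it, and that section is exactly the one-parameter smoothing of the comb that yields the desired $S$-integral $s$. I expect the hard part to be precisely this coordination: proving that the log smoothings removing the crossings over $W'$ can be performed compatibly with the fixed jets at $\Sigma$ and \emph{without} creating any new intersection with $\cD$ away from $S$. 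Reconciling the deformation theory of stable log maps with this global integrality constraint — so that the positivity of the teeth from the smoothly embedded, dominant hypotheses is enough to concentrate the boundary contact over $S$, and so that the rational connectedness of the $\ev$-fibres is exactly what feeds Graber--Harris--Starr — is the crux of the argument.
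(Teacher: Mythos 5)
Your reduction of strong approximation to producing one section through the prescribed points/jets that avoids $\cD$ away from $S$, and your use of hypothesis (1) to get the section $s_0$, both match the paper (Proposition \ref{prop:red-SA} and Step 3 of its proof). But your core step --- attaching $\A^1$-curve teeth over the bad places $W'$ and log-smoothing the comb so as to ``steer the total $\cD$-contact over $S$'' --- is not merely the hard part you flag; it fails for an intersection-theoretic reason that no positivity or log-deformation argument can repair. In any family of stable maps degenerating to your comb $C$, the total intersection number with $\cD$ is constant and the intersection points vary continuously; more precisely, if $\sigma_t$ were a family of $S$-integral sections with flat limit $C$, the flat limit of the length-$(C\cdot\cD)$ subschemes $\sigma_t\cap\cD$ would be a subscheme of $C\cap\cD$ of full length supported over $S$, whereas $C$ (whose handle crosses $\cD$ at the attachment nodes and whose teeth meet $\cD$ exactly there) has positive intersection length over $W'$, and $W'\cap S=\emptyset$. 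So the comb is simply not in the closure of the locus of $S$-integral sections: every smoothing of it still meets $\cD$ near the fibers over $W'$. This is why no comb-smoothing proof of \emph{strong} (as opposed to weak) approximation can work in this form.

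The paper never deforms the weak-approximation section at all, and the two ingredients your proposal is missing are exactly what replaces the comb. First, hypothesis (2) makes $\uX$ rationally connected, so rational points are dense \cite{GHS,KMM}; after enlarging $T$ this yields a second, genuinely \emph{integral} section $s$ (Step 2). Second, \emph{both} evaluation points are pinned: $(s,t)$ is an $F$-point of $\uX\times\uX$, and applying \cite{GHS} to the rationally connected fiber $\u{ev}^{-1}(s,t)$ produces an $F$-point of $\uM$, i.e.\ an $\A^1$-ruled surface $\uH\subset\u\cX$ over $\uB$ containing both $s$ and $t$ (Step 4). The desired section is then found \emph{inside} $\uH$: its generic fiber minus the boundary is $\A^1_F$, the section $t$ makes the prescribed points $x_i$ smooth points of the fibers $\uH_{b_i}$, the integral section $s$ guarantees that integral local points of $\uH\setminus\cD$ exist at every place, and strong approximation for $\A^1_F$ \cite[Theorem 6.13]{Rosen2002} --- the arithmetic input your proposal never invokes --- produces a section through the $x_i$ that is integral away from $T$. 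In other words, the global integrality constraint you correctly identify as the crux is not handled by deformation theory; it is delegated to the known strong approximation theorem for the affine line, with rational connectedness supplying the integral anchor section that makes the relevant adelic set nonempty.
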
}

We refer to Section \ref{ss:notations} for the notations and terminologies of the above theorem. The formulation of strong approximation is defined in Section \ref{sec:strongapp}. If we call Condition (2) above \emph{$\A^1$-simple-connectedness} (with respect to the curve class $\beta$), the theorem above  states that strong approximation holds for $\A^1$-simply-connected $F$-varieties if weak approximation holds. Furthermore, $\A^1$-simple-connectedness is a geometric condition, and only depends on the interior.

Affine spaces are the first class of examples of $\A^1$-simply-connected varieties because any pair of points can be joined by a unique affine line. 

\begin{proposition}\label{prop:affine}
Affine spaces are $\A^1$-simply-connected. Thus strong approximation holds for affine spaces over $F$.\qed
\end{proposition}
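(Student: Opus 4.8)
\begin{pf}
The plan is to present $\A^n$ as the interior of the log smooth pair $X=(\uX,\uD)=(\P^n, H_\infty)$, with $H_\infty$ a hyperplane at infinity, and then to verify the two hypotheses of Theorem \ref{thm:SA}. Condition~(1) is classical: $\uX=\P^n$ is rational and satisfies weak approximation over $F$. The content lies in Condition~(2), the $\A^1$-simple-connectedness of $X$, which I would deduce from the elementary fact that two distinct points of $\A^n$ lie on a unique line.

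Take $\beta$ to be the class of a line, so that an $\A^1$-curve of class $\beta$ is a degree-one stable log map from $\P^1$ meeting $H_\infty$ transversally at a single point of contact order $1$, with two interior marked points $x_1,x_2\in\A^1=\P^1\smallsetminus\{\infty\}$. I would take $\uM\subset\cA_2(X,\beta)$ to be the closure of the locus of such maps whose image is a line not contained in $H_\infty$. Since the space of such lines is an open subset of the Grassmannian of lines in $\P^n$, hence irreducible, and the two marked points contribute irreducible factors, $\uM$ is geometrically irreducible; as the whole construction descends to the prime field, it is defined over $F$. A general member has for its image a line, which is smoothly embedded, giving the first bullet of Condition~(2).

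It remains to analyze the evaluation $\ev\colon\uM\to\uX\times\uX=\P^n\times\P^n$, $(f,x_1,x_2)\mapsto(f(x_1),f(x_2))$. Since the logarithmic tangent bundle has $\deg_\beta c_1(T_{\P^n}(-\log H_\infty))=(n+1)-1=n$, the dimension count gives $\dim\uM=n+(n-3)+3=2n=\dim(\P^n\times\P^n)$. Dominance is immediate, as a line passes through any two distinct points. For the generic fiber, fix a general pair $(p_1,p_2)$ with $p_1\neq p_2$: the image must be the unique line $L$ through $p_1$ and $p_2$, the contact point must map to $L\cap H_\infty$, and the three conditions $x_1\mapsto p_1$, $x_2\mapsto p_2$, $\infty\mapsto L\cap H_\infty$ determine the isomorphism $\P^1\larrowsim L$ uniquely, because three points rigidify a parametrization of $\P^1$. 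Hence the fiber is a single reduced point, which is trivially rationally connected, establishing the second bullet. With both hypotheses of Theorem \ref{thm:SA} in force, strong approximation for $\A^n=\uX\smallsetminus\uD$ follows.

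The underlying geometry is elementary, so I do not anticipate a genuine obstacle; the only delicate point is the moduli-theoretic bookkeeping. One must confirm that the stable log map space $\cA_2(X,\beta)$ carries the expected component $\uM$ and that no degenerate (boundary) stratum intrudes on the generic locus of $\ev$, so that the ``unique line'' analysis genuinely computes the generic fiber as a single point and the logarithmic dimension count is exactly $2n$.
\end{pf}
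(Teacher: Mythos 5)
Your proposal is correct and matches the paper's intended argument: the paper justifies this proposition with exactly the observation you formalize, namely that any two points of $\A^n$ lie on a unique affine line, so the two-pointed evaluation map on the space of $\A^1$-lines is birational (generic fiber a reduced point) and Theorem \ref{thm:SA} applies, with weak approximation for $\P^n$ being classical. Your dimension count ($\dim\uM = n + n + 3 - 3 = 2n$) and the rigidification of the parametrization by the three marked points supply precisely the bookkeeping the paper leaves implicit.
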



By studying the geometry of $\A^1$-conics on complete intersection, we give a bound for low degree smooth complete intersection pairs to be $\A^1$-simply connected. 

\begin{thm}\label{thm:A1-conic-moduli}
	Let $X:=(\uX,\uD)$ be a smooth complete intersection pair of type $(d_1,\cdots,d_c;1)$ in $\P^n$. Assume that $\uX-\uD$ is not the affine space. Then the general fiber of the evaluation morphism defined in (\ref{not:evaluation})
	\begin{equation*}
	\u{ev}: \u\cA_{2}(X,2\alpha)\to \uX\times \uX
	\end{equation*}
	is a smooth complete intersection in $\P^n$ of type 
	\[
	(1,1,\cdots,d_1-1,d_1-1,d_1,\cdots,1,1,\cdots,d_c-1,d_c-1,d_c).
	\]
In particular, the general fiber is rationally connected if $\sum_{i=1}^c d_i^2\le n$.
\end{thm}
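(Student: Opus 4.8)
The plan is to make the fiber of $\u{ev}$ over a general pair $(p,q)$ completely explicit as a family of plane conics and then read off its defining equations. Write $\uX = \{f_1 = \cdots = f_c = 0\}\subset\P^n$ with $\deg f_i = d_i$, and $\uD = \uX\cap\{x_0 = 0\}$. A two-pointed $\A^1$-curve of class $2\alpha$ is a conic in $\uX$ meeting $\uD$ with total multiplicity $2\alpha\cdot\uD = 2$; since the source is $\A^1$ this contact is concentrated at a single point, so the conic is tangent to the hyperplane $\{x_0=0\}$. Rigidifying the three special points of $\P^1$ (the two markings and the contact point), I would bring such a conic into the symmetric normal form
\[
\phi_B(z,w) = z^2\, p + zw\, B + w^2\, q,
\]
where $p = \phi_B(1,0)$ and $q = \phi_B(0,1)$, and where the tangency condition pins down the $x_0$-coordinate of $B$, leaving $B$ to range over an affine space $\A^n$. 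Including the boundary stable log maps (where the contact order $2$ splits as $1+1$) compactifies this parameter space to $\P^n$, and $\u{ev}^{-1}(p,q)$ is the closure in $\P^n$ of the locus of $B$ with $\phi_B\subset\uX$.

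Next I would impose $\phi_B\subset\uX$, i.e. $f_i(\phi_B)(z,w)\equiv 0$ for each $i$. Because $p,q\in\uX$ we have $f_i(\phi_B)(1,0) = f_i(p) = 0$ and $f_i(\phi_B)(0,1) = f_i(q) = 0$, so $zw\mid f_i(\phi_B)$; writing $f_i(\phi_B) = zw\, h_i(z,w;B)$, the condition becomes the vanishing of the $2d_i-1$ coefficients of the degree-$(2d_i-2)$ form $h_i$. The symmetric normal form is what makes the bookkeeping clean: the Taylor expansion
\[
f_i(z^2 p + w^2 q + zw\,B) = \sum_{m=0}^{d_i} (zw)^m \,\tfrac{1}{m!}\, D^m f_i(z^2 p + w^2 q)[B^m]
\]
has $m$-th term homogeneous of degree $m$ in $B$, while $\tfrac{1}{m!}D^m f_i(z^2 p + w^2 q)[B^m]$ is a polynomial in $z^2$ and $w^2$. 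A parity count then shows that the coefficient of $z^a w^{2d_i-2-a}$ in $h_i$ is a form in $B$ of degree exactly $\min(a+1,\,2d_i-1-a)$, its leading part being the polar $\tfrac{1}{(a+1)!}D^{a+1}f_i(q)[B^{a+1}]$ (or the symmetric polar at $p$), which is nonzero for general $q$ (resp.\ $p$). As $a$ runs from $0$ to $2d_i-2$ these degrees trace out $1,2,\dots,d_i,\dots,2,1$, i.e.\ exactly $1,1,2,2,\dots,d_i-1,d_i-1,d_i$; concatenating over $i$ gives the asserted type, with total degree $\sum_i d_i^2$.

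It then remains to show that for general $(p,q)$ these $\sum_i(2d_i-1)$ equations cut out a smooth complete intersection of the expected codimension. For this I would work on the universal family over an open subset of $\uX\times\uX$: generic smoothness in characteristic $0$ yields smoothness of the general fiber, while a dimension/transversality (Bertini-type) argument shows the equations form a regular sequence, so the fiber has codimension exactly $\sum_i(2d_i-1)$ and is a genuine complete intersection of the stated type. Here the hypothesis that $\uX\setminus\uD$ is not an affine space is used to exclude the degenerate situation in which the general two-pointed $\A^1$-conic fails to be an honest irreducible conic, the affine-space case being already covered by Proposition~\ref{prop:affine}. Finally, for a smooth complete intersection in $\P^n$ of total degree $\sum_i d_i^2$, adjunction gives $\omega = \O\!\left(\sum_i d_i^2 - n - 1\right)$, so the fiber is Fano as soon as $\sum_i d_i^2\le n$; a smooth Fano complete intersection of positive dimension is rationally connected, whence the last assertion.

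I expect the genuine complete-intersection and smoothness claim to be the main obstacle: the degree computation is essentially a formal consequence of the symmetric normal form, but verifying the regular-sequence/transversality step for general $(p,q)$, and correctly matching the projective compactification of the conic family with the boundary of $\u\cA_2(X,2\alpha)$, is where the real work lies.
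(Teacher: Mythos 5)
Your degree bookkeeping is correct, and your route is genuinely different from the paper's: the paper never writes down equations for the fiber $\uF := \u{ev}^{-1}(p,q)$ itself. Instead it identifies the boundary divisor $\u\Delta \subset \uF$ (reducible $\A^1$-conics) as a complete intersection of the stated type together with one extra linear form, via the $\A^1$-line computation (Proposition \ref{prop:Fsch}) and a log-lifting theorem (Proposition \ref{prop:lift}); it then shows $\u\Delta$ is cut out on $\uF$ by a single linear form (Propositions \ref{prop:boundary-pullback}, \ref{prop:deg2}, \ref{prop:deg1}), and finally invokes Pan's results \cite[Propositions 4.3 and 7.1]{Pan} --- the embedding of the space of two-pointed conics into $\P^{n-2}$ and a Lefschetz-type recognition theorem --- to conclude that $\uF$ is a complete intersection. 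Your symmetric normal form $z^2p+zwB+w^2q$, with $B_0$ pinned by tangency and $B$ ranging over $\A^n$, together with the count $\min(a+1,\,2d_i-1-a)$ producing the multiset $1,1,2,2,\dots,d_i-1,d_i-1,d_i$, is a clean and correct way to exhibit the expected equations, and if completed it would be self-contained, avoiding Pan's thesis entirely.

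However, the two steps you defer as ``where the real work lies'' are genuine gaps, and they are precisely what occupies most of Section \ref{sec:A1-conic} of the paper. First, identifying the moduli fiber $\uF$ --- a priori an abstract projective scheme of minimal stable log maps --- with the closure in $\P^n_B$ of your affine locus is not a formality. You need a morphism $\uF \to \P^n_B$ that is a closed embedding of schemes, injective on points \emph{and} on tangent vectors; along the boundary this requires that a reducible two-pointed conic with node on $\uD$ admits a log enhancement that is unique up to unique isomorphism (the paper's Propositions \ref{prop:lift} and \ref{prop:embedding}), and you also need that $\uF$ has no components consisting entirely of reducible maps and that $\uF$ is smooth, which the paper gets from freeness of $\A^1$-maps (Proposition \ref{prop:generic-freeness}, Lemma \ref{lem:fiber-smoothness}); note that your appeal to generic smoothness presupposes smoothness of the universal incidence space, which itself comes from this deformation-theoretic input. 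Second, ``generic smoothness plus a Bertini-type argument'' cannot deliver the complete-intersection claim, because your coefficient equations are inhomogeneous in $B$: after homogenizing with respect to $B_0$, the zero scheme of the homogenized system could acquire excess components inside the hyperplane at infinity $\{B_0=0\}$ that are not limits of irreducible $\A^1$-conics, in which case $\uF$ would be merely a union of components of that complete intersection rather than equal to it. Ruling this out amounts to showing that the system of leading forms on $\{B_0=0\}\cong\P^{n-1}$ cuts out exactly the reducible-conic boundary and that this locus has dimension exactly $\dim\uF-1$; that is the content of the paper's Proposition \ref{prop:image-Delta}, which again rests on the $\A^1$-line equations and the lifting proposition. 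Once these two points are supplied, the Cohen--Macaulay/regular-sequence argument and the adjunction computation at the end of your proposal do go through.
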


We refer to Section \ref{ss:notations} for the notations and terminologies of the above proposition. Combining this result with the works of \cite{Hassett-WA,dJS} on weak approximation of low degree complete intersection in projective spaces, we conclude that

\begin{thm}\label{thm:main}
Strong approximation holds for the interior of any {smooth complete intersection pair} of type $(d_1,\cdots, d_c;1)$ in $\P_F^n$ with $$\sum_{i=1}^c d_i^2\le n.$$
\end{thm}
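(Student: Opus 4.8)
The plan is to deduce Theorem \ref{thm:main} by feeding Theorem \ref{thm:A1-conic-moduli} and the known weak approximation results into Theorem \ref{thm:SA}, after first disposing of the affine case. Fix a smooth complete intersection pair $X=(\uX,\uD)$ of type $(d_1,\dots,d_c;1)$ in $\P^n_F$ with $\sum_{i=1}^c d_i^2\le n$, and take the curve class $\beta=2\alpha$ throughout. The degenerate case is immediate: if the interior $\uX-\uD$ is the affine space $\A^n$, then strong approximation holds by Proposition \ref{prop:affine}, so I may assume $\uX-\uD$ is not the affine space, which is exactly the standing hypothesis of Theorem \ref{thm:A1-conic-moduli}.

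Next I would verify the two hypotheses of Theorem \ref{thm:SA}. For Condition (1), note that each $d_i\ge 1$ gives $d_i\le d_i^2$, so $\sum d_i\le\sum d_i^2\le n$ and $\uX$ is a smooth Fano complete intersection; moreover the bound $\sum d_i^2\le n$ is precisely the range in which \cite{dJS} prove that $\uX$ is rationally simply connected, whence weak approximation for $\uX$ over $F$ follows from \cite{dJS,Hassett-WA}. For Condition (2), I would invoke Theorem \ref{thm:A1-conic-moduli} directly: it computes the general fiber of
\[
\u{ev}\colon \u\cA_2(X,2\alpha)\to \uX\times\uX
\]
as a smooth complete intersection in $\P^n$ of the stated multidegree, and shows this fiber is rationally connected exactly when $\sum d_i^2\le n$. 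In particular the general fiber is nonempty of the expected dimension, so $\u{ev}$ is dominant with rationally connected geometric generic fiber, which is the numerical content of Condition (2).

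With both conditions established, Theorem \ref{thm:SA} yields strong approximation for the interior $\uX-\uD$ over $F$, completing the argument. The main obstacle I anticipate is not the numerical bookkeeping but the field-of-definition issue hidden in Condition (2) of Theorem \ref{thm:SA}, which demands a single \emph{geometrically irreducible} component $\uM$ of $\cA_2(X,2\alpha)$ \emph{defined over $F$} whose general member is a smoothly embedded $\A^1$-conic. Theorem \ref{thm:A1-conic-moduli} is a geometric statement, so to apply it here I must extract from its proof both the smoothly-embedded genericity of a general member and the irreducibility of the dominating component, and then descend that component through $\u{ev}$ from $\bar F$ to $F$, checking that dominance and the rational connectedness of the generic fiber are preserved under base change. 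Once this descent is in place, the geometric input of Theorem \ref{thm:A1-conic-moduli} supplies exactly the $F$-rational hypotheses required by Theorem \ref{thm:SA}, and the theorem follows.
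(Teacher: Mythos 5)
Your proposal is correct and takes essentially the same route as the paper: dispose of the affine case, invoke Theorem \ref{thm:A1-conic-moduli} (with the degree hypothesis) to get rational connectedness of the general fiber of $\u{ev}:\u\cA_{2}(X,2\alpha)\to \uX\times\uX$, quote weak approximation for $\uX$ from \cite{dJS,Hassett-WA}, and conclude by Theorem \ref{thm:SA}. The field-of-definition issue you flag as the main obstacle is not addressed in the paper's proof either, and it is resolved in the standard way: since the general fiber of $\u{ev}$ is a positive-dimensional smooth complete intersection, hence geometrically irreducible, there is a unique component of $\u\cA_{2}(X,2\alpha)$ dominating $\uX\times\uX$, which is therefore Galois-stable and defined over $F$.
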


\begin{corollary}\label{cor:main}
Strong approximation holds for the universal cover of the interior of any {smooth complete intersection pair} of type $(d_1,\cdots, d_c;k)$ in $\P_F^n$ with $$\sum_{i=1}^c d_i^2+k^2\le n+1.$$
\end{corollary}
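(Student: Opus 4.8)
The plan is to realize the universal cover as the interior of one further complete intersection pair, to which Theorem~\ref{thm:main} applies verbatim. Write $\uX=\{f_1=\cdots=f_c=0\}\subset\P^n_F$ with $\deg f_i=d_i$, and let the boundary be $\uD=\uX\cap\{g=0\}$ with $\deg g=k$. In $\P^{n+1}_F$, with homogeneous coordinates $[x_0:\cdots:x_n:w]$, I would form the degree $k$ cyclic cover
\[
\tX:=\{\,f_1=\cdots=f_c=0,\ w^k=g\,\}\subset\P^{n+1}_F,\qquad \tD:=\tX\cap\{w=0\}.
\]
By construction $\tX$ is a complete intersection of type $(d_1,\dots,d_c,k)$ and $\tD$ is its hyperplane section $\{w=0\}$, so $(\tX,\tD)$ is a complete intersection pair of type $(d_1,\dots,d_c,k;1)$ in $\P^{n+1}_F$.

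The first task is to check that this is a \emph{smooth} pair. Away from $\{w=0\}$ the projection $\tX\to\uX$ is \'etale, hence $\tX$ is smooth there; along $\{w=0\}$ the equation $w^k=g$ restricts to $g=0$, and the Jacobian criterion shows $\tX$ is smooth exactly because $df_1,\dots,df_c,dg$ are independent along $\uD$, which is precisely the hypothesis that $(\uX,\uD)$ is a smooth complete intersection pair. Since moreover $\tD\cong\uD$ is smooth, $(\tX,\tD)$ is a smooth complete intersection pair of type $(d_1,\dots,d_c,k;1)$.

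The key step is to identify $\tX\setminus\tD$ with the universal cover of the interior $U:=\uX\setminus\uD$. Over $U$ one has $g\neq 0$, so $w^k=g$ exhibits $\tX\setminus\tD\to U$ as a $\boldsymbol{\mu}_k$-torsor, i.e. a connected degree $k$ cyclic Galois \'etale cover. When $k=1$ this is the identity and the statement is just Theorem~\ref{thm:main} with $\sum d_i^2\le n$; for $k\ge 2$ the numerical hypothesis forces $\dim\tX=n-c\ge k^2-1\ge 2$, so $\tX\setminus\tD$ — a smooth affine complete intersection, being the complement of a hyperplane section in a projective complete intersection — is simply connected by the affine Lefschetz theorem (Hamm) for complete intersections. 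Hence $\tX\setminus\tD\to U$ is the universal cover and $\pi_1(U)\cong\ZZ/k$. Finally I would apply Theorem~\ref{thm:main} to $(\tX,\tD)$: its numerical hypothesis reads $\sum_{i=1}^c d_i^2+k^2\le n+1$, which is exactly the hypothesis of the corollary, so strong approximation holds for $\tX\setminus\tD$, i.e. for the universal cover of $\uX\setminus\uD$.

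The main obstacle I anticipate is the identification in the third step: one must be certain that $w^k=g$ is genuinely the universal cover and not merely an intermediate one. This rests on the affine Lefschetz theorem to force simple connectedness of the total space (which in turn needs $\dim\ge 2$, guaranteed by the numerical bound once $k\ge2$), together with connectedness of the cover, which follows from the smoothness of $(\uX,\uD)$ forcing $g$ to be reduced. The smoothness check for $(\tX,\tD)$ at the branch locus is a routine Jacobian computation, but it should be written out since it is the one place where smoothness of the boundary $\uD$ is genuinely used.
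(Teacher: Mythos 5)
Your proposal is correct and follows essentially the same route as the paper: both construct the degree~$k$ cyclic cover $\{w^k=g\}$ as a smooth complete intersection pair of type $(d_1,\dots,d_c,k;1)$ in $\P^{n+1}_F$, identify its interior with the universal cover of $\uX\setminus\uD$, and then apply Theorem~\ref{thm:main}, whose numerical hypothesis becomes exactly $\sum_i d_i^2+k^2\le n+1$. The only difference is that you spell out the smoothness check and the justification (via Hamm's connectivity theorem and the $k=1$ edge case) that the cyclic cover really is the universal cover, details the paper treats as routine.
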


It is known that there exists rational curves on a smooth projective rationally connected variety through any finite number of points. But the analogy for $\A^1$-connected varieties are wildly open. Our results on strong approximation provides an interesting class of  examples from this point of view.

\begin{corollary}\label{cor:geom}
Let $(\uX,\uD)$ be a smooth complete intersection pair of type $(d_1,\cdots, d_c;k)$ in $\P^n_\C$ with $\sum_{i=1}^c d_i^2+k^2\le n+1.$ Then there exists an $\A^1$-curve $(\uX,\uD)$ passing through any $m$-tuple of points on $\uX-\uD$.
\end{corollary}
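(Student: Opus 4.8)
The plan is to read Theorem~\ref{thm:main} and Corollary~\ref{cor:main} geometrically, by taking the base of the function field to be $\P^1$ itself. Write $U := \uX \setminus \uD$ for the interior, and recall that $\uD$ is cut on $\uX$ by a hypersurface of degree $k$, hence is ample. Take the function field $F = \C(t)$ of $\P^1_\C$, which is allowed since the strong approximation theorems hold for the function field of any smooth complex curve; let $\infty$ be the distinguished place, so that $\A^1 = \P^1 \setminus \{\infty\}$, and view $(\uX,\uD)$ as a constant pair base changed from $\C$. The core observation is a dictionary: a non-constant $F$-point of $U$ that is integral away from $\infty$ is exactly an $\A^1$-curve on $(\uX,\uD)$. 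Indeed, such a point is a morphism $\sigma : \A^1 \to U$; since $\uX$ is projective it extends to $f : \P^1 \to \uX$, and since $\uD$ is ample the non-constant $f$ must meet $\uD$, while integrality over $\A^1$ forces $f^{-1}(\uD) = \{\infty\}$. Thus $f$ meets the boundary only at $\infty$, i.e.\ it is an $\A^1$-curve.

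First I would treat the case $k=1$, where Theorem~\ref{thm:main} gives strong approximation for $U$ over $F$. Choose distinct points $p_1,\ldots,p_m \in \A^1$ and set $S = \{\infty, p_1,\ldots,p_m\}$. At each $p_i$ prescribe a local section $\xi_i$ whose value is the given point $x_i \in U(\C)$ (for instance the constant section $x_i$), and to guarantee non-constancy prescribe in addition a nonzero $1$-jet at one place (say $\infty$), or simply note that when the $x_i$ are distinct the resulting section is automatically non-constant. Strong approximation then produces $\sigma \in U(F)$ that is integral off $S$ and approximates each $\xi_i$; in particular $\sigma(p_i) = x_i$, and since each $\xi_i$ takes values in the interior $U$, the section remains in $U$ at every point of $\A^1$. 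By the dictionary above, the extension $f : \P^1 \to \uX$ is an $\A^1$-curve through $x_1,\ldots,x_m$.

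For general $k$ I would pass to the universal cover. By Corollary~\ref{cor:main}, strong approximation holds for the universal cover $\widetilde U \to U$, whose chosen compactification makes the covering extend to a morphism of log smooth pairs $(\widetilde{\uX}, \widetilde{\uD}) \to (\uX,\uD)$ carrying boundary to boundary. I would lift each $x_i$ to a point $\tilde x_i \in \widetilde U$, run the argument of the previous paragraph on $\widetilde U$ to obtain an $\A^1$-curve $\tilde f : \P^1 \to \widetilde{\uX}$ through the $\tilde x_i$, and then compose with the covering map. Since the preimage of $\uD$ is exactly $\widetilde{\uD}$, the composite $\P^1 \to \uX$ still meets $\uD$ only over $\infty$, so it is an $\A^1$-curve on $(\uX,\uD)$ passing through $x_1,\ldots,x_m$.

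The main point to watch is the passage from an integral point to an honest $\A^1$-curve: one must know that the section produced is non-constant and that its boundary contact lies over the single place $\infty$. Both follow from ampleness of $\uD$ together with the integrality built into strong approximation, once non-constancy is arranged by prescribing a suitable local jet. A secondary technical point, already handled by the setup of Corollary~\ref{cor:main}, is that the universal cover carries a log smooth compactification for which the notion of $\A^1$-curve and the covering morphism of pairs are well defined; granting this, the reduction to the simply connected case is immediate.
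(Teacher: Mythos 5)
Your proposal is correct and takes essentially the same route as the paper's own proof: pass to the universal cover supplied by Corollary~\ref{cor:main}, apply strong approximation to the constant family over $\P^1$ away from $\infty$ to produce an integral section through lifts of the prescribed points, read off an $\A^1$-curve from that section via the integrality dictionary, and compose with the covering map. The only flaw is notational: under the paper's Definition~\ref{def:ss-arithmetic} the value conditions at $p_1,\dots,p_m$ must be imposed at places of $T\setminus S$, so you should take $S=\{\infty\}$ and $T=\{\infty,p_1,\dots,p_m\}$ rather than placing the $p_i$ inside $S$, where strong approximation away from $S$ imposes no condition at all.
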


In Section \ref{sec:strongapp}, we will state the geometric version of strong approximation, and proof Theorem \ref{thm:SA}. In Section \ref{sec:A1-line} and \ref{sec:A1-conic}, we analyze the moduli space of $\A^1$-lines and $\A^1$-conics, and conclude the proof of Theorem \ref{thm:main}, and Corollary \ref{cor:main} and \ref{cor:geom} in Section \ref{sec:proof-conclude}.

\subsection{Notations and terminologies}\label{ss:notations}

Capital letters such as $X$, $Y$, $Z$, and $C$, ect. are reserved for log schemes with the corresponding underlying schemes denoted by $\uX$, $\uY$, $\uZ$, and $\uC$. For any log scheme $X$, denote by $X^{\circ} \subset X$ the open locus with the trivial log structure.

An {\em $\A^1$-map} is a genus zero stable log map with precisely one marked point with a non-trivial contact order. An {\em $\A^1$-curve} is an $\A^1$-map with an irreducible source curve, whose image has non-trivial intersection with the open locus of the target with the trivial log structure. We call an $\A^1$-curve an {\em $\A^1$-line} or an {\em $\A^1$-conic} if the curve class of the $\A^1$-curve is the line or the conic curve class respectively.

For any log scheme $Z$, any curve class $\beta$ on $\uZ$, and an positive integer $e$, denote by $\cA_{m}(Z,e\beta)$ the moduli stack of $\A^1$-maps to $Z$ with curve class $e\beta$, and $m$ markings with the trivial contact order. Then $\cA_{m}(Z,e\beta)$ is a log stack with the canonical log structure. Denote by $\u\cA_{m}(Z,e\beta)$ the underlying stack obtained by removing the log structure of $\cA_{m}(Z,e\beta)$. We have the evaluation morphism induced by the $m$-markings with the trivial contact order
\begin{equation}\label{not:evaluation}
\u{ev}: \u\cA_{m}(Z,e\beta) \to \uX\times\cdots\times\uX
\end{equation}
where the right hand side is $m$-copies of $\uX$.

Let $\cR_{m}(\uZ,e\beta)$ be the moduli space of $m$-pointed, genus zero stable maps to $\uZ$ with curve class $e\beta$. 


\begin{notation}\label{not:smooth-pair}
Let $\uX$ be a smooth complete intersection in $\P^n$ ($n\ge 2$) of type $(d_1,\cdots,d_c)$ with $d_i\ge 2$. Let $\uD \subset \uX$ be a smooth hypersurface section of degree $k$. We call the pair $X := (\uX,\uD)$ a {\em smooth complete intersection pair of type $(d_1,\cdots,d_c;k)$}. We denote by $X$ the log scheme associated to the pair $(\uX,\uD)$. Denote by $d=d_1+\cdots+d_c+k.$ Denote by $\alpha$ the line class on $\uX$. 
\end{notation}

We refer to \cite{KKato} for the basics of logarithmic geometry, and \cite{Kato2000, Olsson2007} for the canonical log structures on curves. For the detailed development of stable log maps, the reader should consult \cite{Chen, AC, GS}.

\subsection*{Acknowledgments}
The authors would like to thank Xuanyu Pan for explaining his thesis work \cite{Pan}.

	
	
	

\section{Strong Approximation}\label{sec:strongapp}

\subsection{The arithmetic formulation}

We first recall the adelic formulation of strong (weak) approximation over function fields of curves, see \cite{Hassett-WA}. 

Let $B$ a smooth irreducible projective curve over $\C$ with function field $F=\C(B)$. For each place $v \in B$, denote by $F_{v}$ the completion of $F$ at $v$. Let $S$ be a nonempty finite set of places of $F$,  $\fro_{F,S}$ the ring of $S$-integers. Denote by $\A_{F,S} := \prod_{v \in B\setminus \{S\}}' F_v$ the ring of adeles over all places outside $S$, where the product is the restricted product, i.e. all but finite number of factors are in $\o_{v}$. The ring $\A_{F,S}$ has two natural topologies: the first one is the product topology, and the second one is the adelic topology, with a basis of open sets given by $\prod_{v \notin S} R_{v}$ where $R_v = \o_v$ for all but finitely many $v$. 

Let $U$ be a geometrically integral algebraic variety over $F$. Denote by $U(F)$ be the set of $F$-rational points, and $U(\A_{F,S})$ be the restricted product of $\prod'_{v\notin S}U(F_v)$. Thus, the adelic points $U(\A_{F,S})$ admits the product topology and adelic topology, i.e., locally inheriting from adelic affine spaces.

\begin{definition}\label{def:ss-arithmetic}
	We say that \emph{strong approximation} (respectively, \emph{weak approximation}) holds for $U$ away from $S$ if the inclusion
	\[
	U(F) \to U(\A_{F,S})
	\]
	is dense in the adelic topology (respectively, product topology). To be more precise, this is equivalent to for any finite set $T$ of places containing $S$, any integral model $\cU$ over $\fro_{F,S}$ of $U$, and any open set $W_v\subset U(F_v)$ under the adelic topology for each place $v\in T\backslash S$, the image of $U(F)$ via the diagonal map in
	\begin{equation}\label{eq:SA}
	\prod_{v\in T\backslash S} W_v\times \prod_{v\notin T} \cU(\o_v)(\text{respectively, }\prod_{v\in T\backslash S} W_v\times \prod_{v\notin T} U(F_v))
	\end{equation}
	is not empty. We say that \emph{strong approximation holds for} $U$ if strong approximation holds for $U$ away from any nonempty $S$.
	
	
\end{definition}

The above definition does not depend on the choice of model. Strong approximation implies weak approximation. The converse also holds when $U$ is proper over $F$.




\subsection{The geometric formulation} 
The geometric setting of weak approximation has been formulated and studied in \cite{HT06}. We next translate Definition \ref{def:ss-arithmetic} into the geometric setting. To apply logarithmic geometry, we would like to replace the open variety $U$ by a proper log smooth variety $X$ with the log trivial part $U$. 

\begin{definition}\label{def:int-model}
	Let $X$ be a smooth, proper, and log smooth variety over $F$. Denote by $U = X^{\circ}$ its log trivial open subset. A {\em proper model} of $X$ is a family of log schemes:
	\[
	\pi: \cX \to B
	\]
	such that 
	\begin{enumerate}
		\item $B$ is a smooth projective curve with the trivial log structure;
		\item $\u\pi:\u\cX\to \uB$ is proper flat over $\uB$;
		\item the generic fiber of $\pi$ is $X$.
	\end{enumerate}
	We say such model is \emph{regular} if $\u\cX$ is a smooth variety. This can always be achieved via resolution of singularities.
\end{definition}

\begin{proposition}\label{prop:red-SA}
	Let $U$ be the log trivial open subset of a proper, smooth, log smooth variety $X$ defined over $F$. Then strong approximation holds for $U$ away from $S$ is equivalent to the following statement:\\
	Given any proper regular model of $X$ as in Definition \ref{def:int-model}, any finite set of places $T=S\cup\{b_1,\cdots,b_k\}$ such that $\pi$ is smooth and log smooth over $\o_{F,T}$, any smooth points $x_i$ in $\underline{\cX}_{b_i}^{sm}$ for $i=1,\cdots,k$ can be realized by a section of $\pi$ which is integral (i.e., away from the boundary) over $\o_{F,T}$.

\end{proposition}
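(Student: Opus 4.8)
The plan is to translate both conditions into statements about sections of the model via the valuative criterion, and then to prove the two implications separately: the forward one is a direct application of Definition~\ref{def:ss-arithmetic}, while the converse requires refining the model so as to capture arbitrarily small $v$-adic neighbourhoods. I would begin by fixing the dictionary between points and sections. Since $\u\cX\to\uB$ is proper, an element of $U(F)$ is the same as a section $\sigma\colon\uB\to\u\cX$ whose generic point lies in $U$, and $\sigma$ is integral at a place $v$ (lies in $\cU(\o_v)$) exactly when $\sigma(v)$ avoids the boundary; similarly $U(F_v)$ is identified with the set of sections $\Spec\o_v\to\u\cX$ whose generic point lies in $U$. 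Because $\kappa(v)=\C$ and $\o_v\cong\C[[t]]$, the reduction map $U(F_v)\to\u\cX_v$, $\sigma\mapsto\sigma(v)$, has $t$-adically open fibres, and since $\uD$ is a proper closed subset, $U(F_v)$ is dense in $X(F_v)$; hence for any smooth point $x\in\u\cX_v$ the set of $y\in U(F_v)$ reducing to $x$ is a nonempty open subset.

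For the implication from strong approximation to the geometric statement, I would take a regular model, a set $T=S\cup\{b_1,\dots,b_k\}$, and smooth fibre points $x_i\in\u\cX_{b_i}$, and set $W_{b_i}$ to be the nonempty open set of $y\in U(F_{b_i})$ reducing to $x_i$. Applying Definition~\ref{def:ss-arithmetic} to the interior integral model $\cX^{\circ}$, the set $T$, and the $W_{b_i}$ produces $y\in U(F)$ lying in each $W_{b_i}$ and integral outside $T$; its section $\sigma_y$ then satisfies $\sigma_y(b_i)=x_i$ and avoids the boundary outside $T$, which is exactly the asserted section.

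For the converse I am given $T$, an integral model $\cU$ over $\o_{F,S}$, and $v$-adic opens $W_v\subset U(F_v)$ for $v\in T\setminus S$, and I must produce an $F$-point lying in every $W_v$ and integral outside $T$. First I would spread out and compactify $\cU$ to a proper regular model $\cX$ of $X$ (using resolution of singularities), which is smooth and log smooth away from a finite set of places; enlarge $T$ to a finite $T'$ containing that set. For each $v\in T\setminus S$ I would fix a target $y_v^0\in W_v$ and replace $\cX$ by an iterated blow-up $\cX'$ along the jet of $y_v^0$ in the fibre over $v$, arranged so that a single point-condition $\sigma(v)=x_v$ on $\cX'$ forces agreement with $y_v^0$ to an order high enough that the section lands in $W_v$. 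For each remaining place $v\in T'\setminus T$ I would choose an interior smooth fibre point $x_v$, so that passing through $x_v$ automatically makes the section integral at $v$. Feeding $\cX'$, the set $T'$, and the points $\{x_v\}_{v\in T'\setminus S}$ into the geometric statement yields a section lying in every $W_v$, integral at every $v\in T'\setminus T$, and integral outside $T'$, hence integral outside $T$; its generic point is the desired element of $U(F)$.

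The main obstacle is precisely this shrinking step in the converse: realizing an arbitrarily small $v$-adic neighbourhood by a single point-condition forces one to leave the given model, and one must verify that the iterated blow-up along the jet of $y_v^0$ remains regular (or can be resolved), has the same generic fibre $X$, preserves integrality away from the modified fibres, and genuinely converts ``passing through $x_v$'' into ``agreeing with $y_v^0$ to the prescribed order'', i.e.\ that point-conditions in sufficiently blown-up models are cofinal among $v$-adic neighbourhoods. This jet-versus-point correspondence is the same mechanism underlying the geometric reformulation of weak approximation in \cite{HT06,Hassett-WA,dJS}; the remainder is the valuative-criterion dictionary together with the density of $U(F_v)$ in $X(F_v)$.
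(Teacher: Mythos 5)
Your proof is correct and follows essentially the same route as the paper's: both reduce the equivalence to the section/valuative-criterion dictionary, enlarge $T$ so the integral model $\cU$ embeds in a regular proper model that is smooth and log smooth outside the enlarged set, and then convert the $v$-adic open conditions into point conditions via the iterated blow-ups of jet data from \cite[2.3]{HT06} and \cite[1.5]{Hassett-WA}. The paper's write-up is merely terser, leaving the forward implication and the handling of the auxiliary places in the enlarged set implicit, both of which you spell out correctly.
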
 

\proof With the same notations as in (\ref{def:ss-arithmetic}), since $\cU(\o_v)$ is open in $X(F_v)$ for any $v\notin T$, we may enlarge the set $T$ such that the integral model $\cU$ can be embedded into a regular proper model $\cX$ of $X$ over $\o_{F,T}$ and $\cX$ is smooth and log smooth over $\o_{F,T}$. The rest is done by the iterated blow-ups of the jet datum \cite[2.3]{HT06}, \cite[1.5]{Hassett-WA}. \qed

\subsection{Proof of Theorem \ref{thm:SA}}\ \\

{\bf Step 1}  
To prove the theorem, it suffices to verify the statement in Proposition \ref{prop:red-SA}. Let $\uV\subset \uX\times \uX$ denote the open subset over which $\u{ev}$ has geometrically irreducible, and rationally connected fibers whose general points parametrizing $\A^1$-curves. 

\bigskip

{\bf Step 2}    
By assumption, we know that $X$ is $\A^1$-connected. In particular, $\uX$ is rationally connected. By \cite{GHS,KMM}, the rational points of $\uX$ over $F$ are dense. After enlarging $T$, there exists a rational section $$s:\uB\to\u\cX$$ such that \begin{itemize}
	\item $s$ is integral over $\o_{F,T}$;
	\item the associated rational point, still denoted by $s$, lies in $pr_1(\uV)$.
\end{itemize} 
\bigskip

{\bf Step 3}   
Since weak approximation holds over $F$, we may choose a general section $$t:\uB\to \u\cX$$ such that 
\begin{itemize}
	\item $t(b_i)=x_i$ for $i=1,\cdots, k$;
	\item the associated rational point, still denoted by $t$, lies in $pr_2(\uV)$.
\end{itemize}  
\bigskip

{\bf Step 4}  
The fiber $\u{ev}^{-1}(s,t)$ is a geometrically irreducible rationally connected variety defined over $F$ whose general points parametrize $\A^1$-curves. By \cite{GHS,KMM}, there exists a rational point on $\u{ev}^{-1}(s,t)$ parametrizing a smooth embedded $\A^1$-curve. This rational point gives a generic $\A^1$-ruled surface in $\u\cX$, denoted by $\uH\to \uB$. By construction, the surface $\uH$ contains:
\begin{itemize}
	\item the section $s$ integral over $\o_{F,T}$, and
	\item the section $t$. In particular, $H_{b_i}$ is smooth at $x_i$ for all $i$.
\end{itemize} 

Since strong approximation holds for $\A^1_F$ away from $S$ \cite[Theorem 6.13]{Rosen2002}, we can find a section $\sigma:\uB\to \u{H}\to \u\cX$ with $\sigma(b_i)=x_i$ for all $i$ and integral over $\o_{F,T}$.\qed

\section{$\A^1$-lines through a general point}\label{sec:A1-line}

\subsection{A deformation result}

\begin{proposition}\label{prop:generic-freeness}
	Let $X$ be a projective log smooth variety. For any curve class $\beta \in H_2(\uX)$ and a subscheme $B \in X^{\circ}$ with $B$ either a closed point or the empty set, there are finitely many sub-varieties $\{\uY_i\}$ of $X^{\circ}$ such that if $f: (\PP^1, \infty) \to X$ is an $\A^1$-curve with curve class $\beta $ through $B$, and $f(\PP^1\setminus\{\infty\}) \notin \uY_i$, then $f$ is free. In particular, an $\A^1$-curve through $B$ and a general point of $X^{\circ}$  with curve class $\beta$  is free.
\end{proposition}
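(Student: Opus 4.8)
The plan is to run the standard generic-smoothness argument for freeness of rational curves, adapted to the logarithmic deformation theory of $\A^1$-curves. Recall that the deformation theory of an $\A^1$-curve $f\colon(\PP^1,\infty)\to X$ (after passing to the underlying stack) is governed by the log tangent bundle $f^*T_{\uX}(-\log\uD)$, which splits on $\PP^1$ as $\bigoplus_i \cO_{\PP^1}(a_i)$; by definition $f$ is free precisely when this bundle is globally generated, i.e.\ all $a_i\ge 0$, equivalently $H^1(\PP^1, f^*T_{\uX}(-\log\uD)(-1))=0$. The first input I would record is that this is an open condition in families by semicontinuity, that the moduli stack of $\A^1$-curves of class $\beta$ through $B$ is of finite type, and hence that its non-free locus is closed and has finitely many irreducible components $\cM_1,\dots,\cM_r$.

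Next I would define, for each $j$, the subvariety $\uY_j\subset X^\circ$ to be the intersection with $X^\circ$ of the closure of the image of the universal $\A^1$-curve over $\cM_j$ under the evaluation to $\uX$. This produces finitely many candidate subvarieties. By construction, every non-free $\A^1$-curve of class $\beta$ through $B$ lies in some $\cM_j$, so its interior image $f(\PP^1\setminus\{\infty\})$ is contained in the corresponding $\uY_j$; thus the entire content of the statement reduces to the claim that each $\uY_j$ is a \emph{proper} subvariety of $X^\circ$.

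The heart of the argument is this properness, which I would prove by contradiction. Suppose some $\cM_j$ dominates $\uX$, i.e.\ the evaluation $\mathrm{ev}\colon \cU_j\to\uX$ from the universal curve is dominant. Since we work over $\C$, generic smoothness makes $\mathrm{ev}$ smooth at a general point $(f,q)$, with $q$ lying in the interior $\PP^1\setminus\{\infty\}$ away from the preimage of $B$, so that near $q$ the log tangent bundle is just $f^*T_{\uX}$ and the log structure plays no role. The differential of $\mathrm{ev}$ at $(f,q)$ has image $\mathrm{ev}_q\big(H^0(f^*T_{\uX}(-\log\uD))\big)+\C\cdot df_q(T_q\PP^1)$ inside $T_{f(q)}\uX=\big(f^*T_{\uX}(-\log\uD)\big)|_q$; a summand $\cO_{\PP^1}(a_i)$ with $a_i<0$ has no global sections and admits no nonzero map from $T_{\PP^1}=\cO(2)$, so its fiber direction lies in neither term. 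Hence smoothness of $\mathrm{ev}$ forces every $a_i\ge 0$, i.e.\ the general member of $\cM_j$ is free, contradicting that $\cM_j$ parametrizes non-free curves. Imposing the incidence with $B$ only shrinks the relevant space of sections, so the same conclusion holds verbatim in the through-$B$ case. Therefore each $\uY_j\subsetneq X^\circ$, which proves the finitely-many-subvarieties statement; the final assertion follows because a general point of $X^\circ$ avoids the proper closed subset $\bigcup_j\uY_j$, so any $\A^1$-curve through $B$ and such a point has interior image contained in no $\uY_j$ and is therefore free.

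The main obstacle I anticipate is not the dimension count, which is classical, but verifying the logarithmic deformation-theoretic inputs: that the obstruction theory of $\A^1$-curves is controlled by $H^1(\PP^1, f^*T_{\uX}(-\log\uD))$, that the relevant moduli stack is of finite type, and that smoothness of the evaluation map translates into global generation of the log tangent bundle at the evaluated point. Because the evaluation happens at general interior points, where the log structure is trivial, I expect these to reduce to the classical statements, but the reduction must be set up carefully using the stable log maps formalism of \cite{Chen, AC, GS}.
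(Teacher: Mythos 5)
Your proposal is correct, and it runs on the same engine as the paper's own proof: characteristic-zero generic smoothness applied to the evaluation map of the universal curve over components of a finite-type moduli space, combined with the Koll\'ar-type identification of the differential of $\mathrm{ev}$ at $(f,q)$ with evaluation of sections of $f^*T_{\uX}(-\log \uD)$ plus $df_q$, so that smoothness of $\mathrm{ev}$ forces all splitting degrees $a_i$ to be nonnegative. The bookkeeping, however, genuinely differs. The paper decomposes the \emph{whole} moduli space $\lM^{\circ}_{B}(X,\beta)$ into irreducible components $Z_i$, and takes $\uY_i$ to be the closure of the image of the universal map when that map is not dominant, and the complement $X^{\circ}\setminus \uU_i$ of a generic-smoothness open set when it is dominant; a curve avoiding every $\uY_i$ then lies in a dominant component and meets $\uU_i$, and the paper concludes by invoking the argument of \cite[Chapter II 3.10]{Kollar}. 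You instead decompose only the \emph{non-free locus} into components $\cM_j$, take image closures as the $\uY_j$, and derive a contradiction from the assumption that some $\cM_j$ dominates $\uX$. Your route requires one extra (true and standard) ingredient that the paper's does not: that non-freeness is a closed condition, via semicontinuity of $h^1\bigl(f^*T_{\uX}(-\log\uD)(-1)\bigr)$ over the finite-type moduli. In exchange it buys a self-contained proof of the key step rather than a citation, and bad loci with a cleaner geometric meaning, namely the $\uY_j$ are exactly the proper closed subsets swept out by non-free $\A^1$-curves of class $\beta$ through $B$. Both arguments ultimately rest on the same log-deformation-theoretic input, which you rightly flag as the point needing care: that the tangent space of the moduli of $\A^1$-curves at $f$ maps to $H^0(\PP^1, f^*T_{\uX}(-\log\uD))$ compatibly with evaluation, which is where the stable log maps formalism of \cite{Chen, AC, GS} enters and which the paper likewise leaves implicit in its appeal to \cite{Kollar}.
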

\begin{proof}
	Denote by 
	\[\lM^{\circ}_{B}(X,\beta)=
	\begin{cases}
	\lM^{\circ}_{0}(X,\beta), \mbox{  if $B = \emptyset$, or}\\
	\uev^{-1}(B), \mbox{  if $B$ is a point,}
	\end{cases}
	\]
	where $\uev: \lM^{\circ}_{1}(X,\beta) \to \uX$ is the evaluation morphism induced by the marking with the trivial contact order.
	
	Let $Z_i$ be the irreducible component of $\lM^{\circ}_{B}(X,\beta)$ with the universal morphism $f^{\circ}_i: C_{i}^{\circ}:= C_i\setminus\{\infty\} \to X$. Let
	\[
	\uY = 
	\begin{cases}
	\overline{f^{\circ}_i(C_{i}^{\circ})}, \mbox{  if $f^{\circ}_i$ is not dominant, and}\\
	X^{\circ} \setminus U_i, \mbox{  if $f^{\circ}_i$ is dominant,}
	\end{cases}
	\]
	where $\uU_i \subset X^{\circ}$ is an open and dense subset such that $f^{\circ}$ is smooth over $\uU_i$, and all closures are taken in $X^{\circ}$. 
	
	Consider an $\A^1$-curve $f:  (\PP^1, \infty) \to X$ of curve class $\beta$ with $f(\PP^1\setminus\{\infty\}) \notin \uY_i$ for any $i$. Let $Z_j$ be the component containing $f$. By construction, the universal morphism $f^{\circ}_j$ is dominant, and $f$ intersects $\uU_j$. Same argument as in \cite[Chapter II 3.10]{Kollar} implies that $f$ is free. 
\end{proof}

\begin{corollary}
	Notations and assumptions as in Proposition \ref{prop:generic-freeness}, any $\A^1$-curve passing through $B$ and a very general point of $X^{\circ}$ is free.
\end{corollary}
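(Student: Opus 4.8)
The plan is to upgrade the ``general point'' conclusion of Proposition~\ref{prop:generic-freeness}, which is established for one fixed curve class $\beta$, to a statement holding simultaneously for $\A^1$-curves of \emph{every} curve class. The only mechanism needed is the standard distinction between ``general'' and ``very general'': a general point avoids finitely many proper closed subsets, while a very general point avoids a countable union of them, and over the uncountable field $\C$ the complement of such a countable union is still dense in the irreducible variety $X^{\circ}$.

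First I would observe that the curve classes are countable: any class $\beta$ realized by an $\A^1$-curve lies in the finitely generated group $H_2(\uX,\ZZ)$, so there are at most countably many. For each such $\beta$, Proposition~\ref{prop:generic-freeness} produces a finite collection $\{\uY^{\beta}_i\}$ of subvarieties of $X^{\circ}$, and by its proof each $\uY^{\beta}_i$ is a \emph{proper} closed subvariety---either the closure of the image of a non-dominant universal map, or the complement $X^{\circ}\setminus \uU_i$ of a dense open locus of a dominant one. I would then set $\uW := \bigcup_{\beta}\bigcup_i \uY^{\beta}_i$, a countable union of proper closed subvarieties, and call a point very general when it lies in $X^{\circ}\setminus\uW$.

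It then remains to check the conclusion pointwise. Fix $x\in X^{\circ}\setminus\uW$ and let $f:(\PP^1,\infty)\to X$ be any $\A^1$-curve through $B$ and $x$, of some curve class $\beta_0$. Because $x$ lies in the log-trivial interior $X^{\circ}$ whereas the marked point $\infty$ is sent into the boundary, $x$ is the image of a point of $\PP^1\setminus\{\infty\}$; since $x\notin \uY^{\beta_0}_i$ for every $i$, the affine image $f(\PP^1\setminus\{\infty\})$ cannot be contained in any $\uY^{\beta_0}_i$. Proposition~\ref{prop:generic-freeness} then yields that $f$ is free.

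The substance of the argument is entirely in the second paragraph---recording countability of the curve classes and that each $\uY^{\beta}_i$ is proper closed; after that the reasoning is set-theoretic. The one point to handle with care is ensuring that ``passing through $x$'' places $x$ in the affine part $f(\PP^1\setminus\{\infty\})$ and not at the boundary value $f(\infty)$, which is guaranteed precisely because $x$ is an interior point.
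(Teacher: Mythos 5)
Your proof is correct and is essentially the paper's own argument: the paper's one-line proof (``taking into account all choices of curve classes'') is exactly your countable-union construction, which you have simply spelled out in full, including the correct observations that $H_2(\uX,\ZZ)$ gives countably many classes, that each $\uY^{\beta}_i$ is a proper closed subvariety, and that an interior point $x \in X^{\circ}$ must lie in $f(\PP^1\setminus\{\infty\})$ rather than at $f(\infty)$.
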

\begin{proof}
	This follows from Proposition \ref{prop:generic-freeness} by taking into account all choices of curve classes.
\end{proof}

\subsection{$\A^1$-lines on smooth complete intersection pairs}

Consider the smooth complete intersection pair $X = (\uX, \uD)$ as in Notation \ref{not:smooth-pair}. In this subsection, we study the evaluation morphism 

$$\u{ev}:\u\cA_1(X,\alpha)\to \uX.$$

\begin{proposition}\label{prop:expected-dim}
\begin{enumerate}
	\item A general fiber of $\u{ev}$ is smooth and projective.
	\item Every nonempty connected component of a general fiber is of expected dimension $n-d$.
\end{enumerate}
\end{proposition}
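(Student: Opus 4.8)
The plan is to run the standard deformation-theoretic argument for free curves, now in the logarithmic category, using Proposition~\ref{prop:generic-freeness} to guarantee freeness at a general point. Write $T_X := T_{\uX}(-\log\uD)$ for the logarithmic tangent bundle; it is locally free of rank $\dim\uX = n-c$ and governs the deformation theory of $\u\cA_1(X,\alpha)$. First I would fix a general point $p\in\uX$ and let $f\colon(\PP^1,\infty)\to X$ be any $\A^1$-line of class $\alpha$ whose free (trivial-contact) marking $x_2$ maps to $p$. Applying Proposition~\ref{prop:generic-freeness} with $B=\emptyset$ and $\beta=\alpha$ produces finitely many proper closed subsets $\uY_i\subset X^\circ$; for $p$ outside their union no such $f$ can have $f(\PP^1\setminus\{\infty\})$ contained in any $\uY_i$ (otherwise $p\in\uY_i$), so every $\A^1$-line through a general $p$ is free. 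Hence the whole fiber $\u{ev}^{-1}(p)$ consists of free $\A^1$-lines.

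Next I would convert freeness into smoothness and a dimension count. For a free $f$ one has $\H^1(\PP^1,f^*T_X)=0$ and $f^*T_X$ globally generated. The vanishing makes $\u\cA_1(X,\alpha)$ smooth at $[f]$ of dimension equal to its expected (log Gromov--Witten virtual) dimension; global generation makes the differential of $\u{ev}$ at $[f]$, namely the evaluation $\H^0(\PP^1,f^*T_X)\to (f^*T_X)_{x_2}=T_{\uX,p}$ at the free marking (where $x_2\in X^\circ$, so $T_X=T_{\uX}$), surjective, so $\u{ev}$ is smooth at $[f]$. Therefore the general fiber is smooth, and each of its components has dimension $\dim_{[f]}\u\cA_1(X,\alpha)-\dim\uX$. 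To evaluate this I would use Riemann--Roch: since $c_1(T_X)=-K_{\uX}-[\uD]=(n+1-d)H|_{\uX}$ and $\alpha\cdot H=1$, we get $\deg f^*T_X=n+1-d$, hence $\chi(\PP^1,f^*T_X)=(n+1-d)+(n-c)$ and the expected dimension of $\u\cA_1(X,\alpha)$ is $\chi(\PP^1,f^*T_X)-3+2=2n-c-d$. Subtracting $\dim\uX=n-c$ gives relative dimension $n-d$, which yields part~(2); the ``nonempty connected component'' phrasing is automatic since a smooth morphism has pure relative dimension.

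For projectivity in part~(1), I would identify $\u{ev}^{-1}(p)$ with the incidence scheme of lines $L\subset\uX$ through $p$ meeting $\uD$ at a single point with full contact order $k$: each such $L$ underlies a unique automorphism-free $\A^1$-line, realizing the general fiber as a closed, hence projective, subscheme of the $\PP^{n-1}$ of lines through $p$ (alternatively, one may invoke properness of $\u\cA_1(X,\alpha)$ together with the smoothness already shown). The main obstacle I expect is the logarithmic bookkeeping: pinning down the precise notion of freeness for $T_X$ at the contact marking $\infty$, confirming that the log deformation theory is genuinely unobstructed once $\H^1(\PP^1,f^*T_X)=0$ so that the Riemann--Roch count is the actual dimension, and verifying that the evaluation differential is the honest restriction to $x_2$. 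A secondary, routine point is checking that the smooth moduli-theoretic fiber coincides as a scheme with the naive incidence scheme inside $\PP^{n-1}$.
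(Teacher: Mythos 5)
Your proposal is correct and follows essentially the same route as the paper: freeness of every $\A^1$-line in a general fiber via Proposition \ref{prop:generic-freeness}, smoothness of $\u{ev}$ at free maps by log deformation theory, and the fiber dimension computed as $c_1(TX)\cdot\alpha+\dim \uX+2-3-\dim\uX=n-d$, which is exactly the paper's Euler-characteristic count. The only cosmetic difference is your incidence-scheme argument for projectivity, where the paper implicitly relies on properness of the moduli of stable log maps (your stated alternative), so nothing essential diverges.
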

\proof 
The first statement follows from Proposition \ref{prop:generic-freeness}. Since every $\A^1$-map with line class in a general fiber is free, the dimension is calculated by the Euler characteristic of the pullback of the log tangent bundle.
$$c_1(TX).\alpha+\dim \uX+2-3-\dim \uX=n+1-d-1=n-d.$$
\qed

Next we would like to describe the general fiber of $\u{ev}$ explicitly in equations. Fix a general point {$x\in X^{\circ}$}. Let ${\uL_{x}}$ be the fiber over $x$ of the evaluation morphism:
$$\u{ev}:\u\cA_1(X,\alpha)\to \uX.$$

We consider the restriction of the boundary evaluation morphism on $\uL_{x}$:
$$b':\uL_{x}\to \uD.$$

\begin{proposition}\label{prop:Fsch}
 If $d=d_1+\cdots+d_c+k\le n$, the morphism $b'$ is a closed immersion, and the image of $\uL_{x}$ is an irreducible, smooth complete intersection in $\P^n$ of type 
	$$(1,\cdots,d_1,\cdots,1,\cdots,d_c,1,\cdots, k).$$
\end{proposition}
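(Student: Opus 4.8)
The plan is to parametrize the fiber $\uL_{x}$ by the boundary point of each $\A^1$-line and then read off the defining equations of its image explicitly. Since $x \notin \uD$, any $\A^1$-line of class $\alpha$ through $x$ has an irreducible source mapping isomorphically onto an honest line $\ell \subset \uX$ through $x$, meeting $\uD$ in the single contact point $y = b'(\ell)$; because $x \neq y$ the line is recovered as $\ell = \overline{xy}$. This already shows $b'$ is injective on points, with the evident candidate inverse $y \mapsto \overline{xy}$. First I would make this rigorous by producing the universal line $\overline{xy}$ over the locus of admissible $y$, equipping it with the canonical log structure so that it becomes a family of stable log maps, and thereby constructing a morphism inverse to $b'$; this identifies $\uL_{x}$ with a closed subscheme $W \subset \P^n$ and shows $b'$ is a closed immersion.

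Next I would write $W$ out in coordinates. Parametrizing $\ell = \overline{xy}$ by $[s:t] \mapsto sx+ty$ and expanding, set
\[
F_i(sx+ty) = \sum_{j=0}^{d_i} P_{i,j}(x,y)\, s^{d_i-j} t^{j}, \qquad G(sx+ty) = \sum_{j=0}^{k} Q_{j}(x,y)\, s^{k-j} t^{j},
\]
where $P_{i,j}$ (resp. $Q_j$) is homogeneous of degree $j$ in $y$. Since $x \in \uX \setminus \uD$ we have $P_{i,0}=F_i(x)=0$ and $Q_0 = G(x) \neq 0$. The condition $\ell \subset \uX$ is exactly $P_{i,j}(x,y)=0$ for $1 \le j \le d_i$ and all $i$, giving forms of degrees $1,\dots,d_i$; the condition that $\ell$ meet $\uD$ only at $y$ with full contact order $k$ forces $G|_{\ell}$ to be proportional to $s^{k}$, i.e. $Q_j(x,y)=0$ for $1 \le j \le k$, giving forms of degrees $1,\dots,k$. (Note $P_{i,d_i}(x,y)=F_i(y)$ and $Q_k(x,y)=G(y)$, so these equations automatically force $y \in \uD$.) Thus set-theoretically $b'(\uL_{x})$ is the common zero locus $W$ of these $d=d_1+\cdots+d_c+k$ forms, whose degree list is precisely the asserted type.

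It then remains to see that $W$ is a \emph{smooth complete intersection} of this type, and here I would invoke Proposition \ref{prop:expected-dim}: for general $x$ the fiber $\uL_{x}$ is smooth of pure dimension $n-d$, hence so is its image under the closed immersion $b'$. Since $W$ is cut out by exactly $d$ equations in $\P^n$ and has dimension $n-d$, these equations form a regular sequence and $W$ is a complete intersection; being Cohen--Macaulay it has no embedded components, and as its reduction is the smooth variety $b'(\uL_{x})$ it is reduced, so $W = b'(\uL_{x})$ as schemes and is smooth. Finally, irreducibility follows because a smooth complete intersection of positive dimension in $\P^n$ is connected (in the boundary case $d=n$ the fiber is a finite reduced scheme and the statement degenerates to a count of $\A^1$-lines).

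I expect the main obstacle to be the scheme-theoretic, as opposed to set-theoretic, content: verifying that $b'$ is genuinely a closed immersion and that the $d$ explicit forms cut out a reduced scheme of the expected codimension rather than something non-reduced or of excess dimension. This transversality is exactly where the freeness of the $\A^1$-lines from Proposition \ref{prop:generic-freeness}, and the resulting smoothness and dimension count of Proposition \ref{prop:expected-dim}, must enter — either through a tangent-space computation showing $b'$ is unramified, or through the Cohen--Macaulay reducedness argument above.
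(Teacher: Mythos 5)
Your coordinate set-up coincides with the paper's: expanding the $F_i$ and $G$ along the pencil of lines through $x$ produces the same forms and the same degree list, and your idea of inverting $b'$ by $y\mapsto \overline{xy}$ is exactly the paper's mechanism. The genuine gap is the step that upgrades the set-theoretic identification to the scheme-theoretic one. You argue that the scheme $W$ cut out by the $d$ forms is a complete intersection of dimension $n-d$, hence Cohen--Macaulay, and then conclude it is reduced ``as its reduction is the smooth variety $b'(\uL_{x})$.'' That implication is false: a complete intersection can be everywhere non-reduced while having smooth reduction, e.g.\ the double line $\{x_0^2=0\}\subset \P^2$ is Cohen--Macaulay and its reduction is a smooth line. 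Cohen--Macaulayness gives the absence of embedded components (condition $(S_1)$), but reducedness also requires generic reducedness (condition $(R_0)$), which is precisely the transversality you have not established; smoothness of the reduction does not supply it, and neither would unramifiedness of $b'$, since that concerns the image subscheme rather than the scheme structure of $W$.

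The paper closes exactly this gap by running your first paragraph functorially over $W$ itself, rather than over an unspecified ``locus of admissible $y$'': it defines $\uZ$ to be the scheme cut out by the forms and checks that every $T$-point of $\uZ$, for an arbitrary affine scheme $T$ --- in particular $T=\uZ$ with its a priori possibly non-reduced structure --- determines a $T$-family of lines through $x$ via the projection $\P^n_T\setminus\{x_T\}\to \P^{n-1}_T$, and that this family meets the boundary exactly once, hence is a family of $\A^1$-lines. This yields a morphism $\uZ\to \uL_{x}$ inverse to $b'$, so $b'$ is a closed immersion with image $\uZ$ as schemes; reducedness, smoothness, and (via connectedness of positive-dimensional complete intersections) irreducibility of $\uZ$ are then inherited from $\uL_{x}$ by Proposition \ref{prop:expected-dim}, with no Cohen--Macaulay argument needed. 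So your plan is right in outline, but the inverse must be constructed on the complete-intersection scheme $W$ itself; as written, passing from $W$ to its reduction is the unjustified step.
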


Let $T=\Spec R$ be any affine scheme. For any scheme $\uZ$, we denote by $\uZ_T := \uZ\times T$. 

The scheme $L_x$ is determined by its $T$-points:
$$L_{x}(T)=\{ \A^1\text{-lines in }\uX_T \text{ through } x_T  \}.$$

Assume for simplicity $x=[1:0:\cdots:0]\in X^{\circ}$. Consider a $T$-point $q = [x_0:x_1:\cdots:x_n] \in \uD(T)$. A $T$-line $\ell$ joining $x_T$ and $q$ can be expressed as 
$$[t+x_0:x_1:\cdots:x_n], $$
{where $t$ is the parameter of the line and $x_i\in R$ for each $i$.}

Let $F_i$ be the defining equation of $\uX$ for $i=1,\cdots,c$ with $\deg F_i = d_i$. Restricting them on the line equation of $l$, we have
\begin{equation}\label{equ:line-on-uX}
F_i(t+x_0,x_1,\cdots,x_n) = P_{i0}\cdot t^{d_i} + P_{i1}\cdot t^{d_i - 1} + \cdots + P_{id_i}
\end{equation}
where $P_{ij}\in R[x_0,\cdots,x_n]$ is a homogeneous polynomial of degree $j$. The condition $y\in \uY$ implies that $P_{{ i 0}} = 0$. The condition $\ell \subset \uX_T$ is equivalent to the vanishing of $P_{i1}, \cdots, P_{id_i}$ for each $i$, which gives a complete intersection of type 
\[
(1, 2, \cdots, d_1, \cdots, 1, 2, \cdots d_c).
\]

Similarly, let $G$ be the defining equation of $\uD$. Restricting them on the line equation of $\ell$, we have:
\begin{equation}\label{equ:line-on-uD}
G(t+x_0,x_1,\cdots,x_n) = Q_{0}\cdot t^{d_i} + Q_{1}\cdot t^{d_i -1 } + \cdots + Q_{k},
\end{equation}
where $Q_{j}\in R[x_0,\cdots,x_n]$ is a homogeneous polynomial of degree $j$. The point $x_T$ lying outside $\uD_T$ implies that $Q_0\neq 0$. Note that $Q_k$ is indeed $G$. The condition being an $\A^1$-line is equivalent to the vanishing of the polynomials $Q_1,\cdots, Q_k$, i.e., a complete intersection of type $$(1,\cdots,k).$$

Now we define a complete interesection $\uZ$ in $\P^n_T$ defined by the equations:$$P_{11},\cdots, P_{1d_1},\cdots,P_{c1},\cdots,P_{cd_c},Q_1,\cdots,Q_k.$$ 
Since $Q_k=G$, $\uZ$ is automatically a closed subscheme of $\uD$.

To summarize, we proved the following.
\begin{lemma}
	The image of the morphism $$b':\uL_x\to \uD$$ lies in $\uZ$.
\end{lemma}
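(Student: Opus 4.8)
The plan is to check the inclusion $\im(b') \subseteq \uZ$ on $T$-points, using the functor-of-points description of $\uL_x$ set up above. Fix an affine test scheme $T = \Spec R$ and an arbitrary $T$-point of $\uL_x$; by definition this is an $\A^1$-line $\ell \subset \uX_T$ through $x_T$, and $b'$ sends it to the point $q = [x_0 : \cdots : x_n] \in \uD(T)$ at which $\ell$ meets $\uD$. After the normalization $x = [1 : 0 : \cdots : 0]$, the line $\ell$ is precisely the line $[t + x_0 : x_1 : \cdots : x_n]$, so the boundary point $q$ is exactly the datum recording the chosen $T$-point.

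The two defining conditions of an $\A^1$-line then translate directly into the equations cutting out $\uZ$. Because $\ell \subset \uX_T$, every restriction $F_i|_\ell = F_i(t + x_0, x_1, \dots, x_n)$ vanishes identically in $t$; by the expansion (\ref{equ:line-on-uX}) this is equivalent to $P_{i1}(q) = \cdots = P_{id_i}(q) = 0$ for each $i$ (the leading coefficient $P_{i0}$ vanishing automatically). Because $\ell$ moreover meets $\uD$ with full contact order $k$ at $q$, the expansion (\ref{equ:line-on-uD}) of $G|_\ell$ forces $Q_1(q) = \cdots = Q_k(q) = 0$. Hence $q$ annihilates every generator $P_{11}, \dots, P_{cd_c}, Q_1, \dots, Q_k$ of the ideal of $\uZ$, so $q \in \uZ(T)$.

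Since $T$ and the chosen $T$-point were arbitrary, Yoneda yields that $b'$ factors through the closed immersion $\uZ \into \uD$, which is exactly the claim $\im(b') \subseteq \uZ$. Both the expansions above and the reinterpretation of ``$\ell \subset \uX$'' and ``$\ell$ is an $\A^1$-line'' as the vanishing of the $P_{ij}$ and the $Q_j$ were already carried out in the preceding paragraphs, so the argument is really an assembly of these observations, and I anticipate no serious obstacle. The one point deserving care is confirming that in these coordinates $b'$ is literally the morphism remembering $q$, so that the vanishing of the $P_{ij}$ and $Q_j$ at $q$ coincides with the equations defining $\uZ$; once that identification is in hand, the containment is immediate.
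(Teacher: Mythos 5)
Your proposal is correct and follows essentially the same route as the paper: the lemma there is stated as a summary of the immediately preceding computation, namely that the conditions ``$\ell \subset \uX_T$'' and ``$\ell$ is an $\A^1$-line'' are equivalent to the vanishing at $q$ of the coefficients $P_{ij}$ and $Q_j$ from the expansions (\ref{equ:line-on-uX}) and (\ref{equ:line-on-uD}), which are precisely the defining equations of $\uZ$. Your explicit packaging of this as a $T$-point/Yoneda argument makes the logic slightly more formal but adds nothing substantively different.
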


\proof[Proof of Proposition \ref{prop:Fsch}] It suffices to prove that $\uL_x$ is isomorphic to $\uZ$ under $b'$, i.e., every $T$-point of $\uZ$ is the image of a unique $T$-point of $\uL_x$ under $b'$. This follows from the fact that any $T$-point of $\uZ$ gives a $T$-family of lines via the projection:
$$pr:\P^n_T-\{x_T\}\to \P_T^{n-1},$$
where the target is the Hilbert scheme of lines through $y_T$. Furthermore, such family of lines meet the boundary exactly once, hence is a family of $\A^1$-lines.
\qed

\begin{corollary}
	A general fiber of $\u{ev}$ is a nonempty, irreducible, and smooth complete intersection if $X$ is log Fano, or equivalently, $d\le n$.
\end{corollary}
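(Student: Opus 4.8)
The plan is to read off all three properties from the explicit description of the fibre already available. A general fibre of $\u{ev}$ is the scheme $\uL_x$ for a general $x\in X^\circ$, so under the standing hypothesis $d\le n$ it suffices to analyze $\uL_x$. By Proposition \ref{prop:Fsch} the boundary evaluation $b'$ embeds $\uL_x$ isomorphically onto the subscheme $\uZ\subset\P^n$ cut out by the $d=d_1+\cdots+d_c+k$ forms $P_{11},\dots,P_{cd_c},Q_1,\dots,Q_k$ of degrees $1,2,\dots,d_1,\dots,1,2,\dots,d_c,1,2,\dots,k$, so it is enough to verify nonemptiness, smoothness, and irreducibility for $\uZ$.

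First I would settle nonemptiness together with the complete-intersection claim. As $\uZ$ is the intersection of $d$ hypersurfaces in $\P^n$ and $d\le n$, the projective dimension theorem forces every component of $\uZ$ to have dimension $\ge n-d\ge 0$; in particular $\uZ\ne\emptyset$. On the other hand Proposition \ref{prop:expected-dim} shows that a general fibre is smooth and that each of its components has the expected dimension $n-d$. Hence the $d$ defining forms cut $\uZ$ out in codimension exactly $d$, so $\uZ$ is a genuine smooth complete intersection of pure dimension $n-d$.

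For irreducibility, which I view as the one substantive point, smoothness is already in hand, so it suffices to prove that $\uZ$ is connected, a connected smooth variety being irreducible. Here I would invoke the connectedness of complete intersections: a complete intersection in $\P^n$ of positive dimension is connected (equivalently $h^0(\cO_{\uZ})=1$), which one proves by first slicing with the linear members of the defining system and then applying the Lefschetz hyperplane theorem to the remaining hypersurfaces of degree $\ge 2$; this gives the irreducibility asserted in Proposition \ref{prop:Fsch}. Finally the equivalence between the log Fano condition and $d\le n$ is the adjunction computation $K_{\uX}+\uD=(d_1+\cdots+d_c-n-1+k)H=(d-n-1)H$, so that $-(K_{\uX}+\uD)$ is ample precisely when $d\le n$. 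The main obstacle is the irreducibility step: nonemptiness and smoothness are formal consequences of Propositions \ref{prop:Fsch} and \ref{prop:expected-dim}, whereas irreducibility genuinely requires the connectedness theorem for positive-dimensional complete intersections rather than any genericity of the chosen point $x$.
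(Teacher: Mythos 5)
Your argument follows essentially the same route as the paper's: the paper's entire proof of this corollary is the two-word citation of Propositions \ref{prop:expected-dim} and \ref{prop:Fsch}, and your write-up is that citation with the details made explicit (nonemptiness from the projective dimension theorem, smoothness and pure dimension from Proposition \ref{prop:expected-dim}, the complete-intersection structure from Proposition \ref{prop:Fsch}). The one substantive addition is your irreducibility step: note that Proposition \ref{prop:Fsch} already \emph{asserts} irreducibility, so within the paper's logic the corollary may simply quote it; what you have supplied is the proof the paper omits, namely connectedness of a positive-dimensional complete intersection combined with smoothness, which is indeed the standard argument.

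However, your own hypothesis of \emph{positive dimension} is doing real work, and it exposes a boundary case your proof does not cover: the corollary (and Proposition \ref{prop:Fsch}) allow $d = n$, where $\uZ$ is zero-dimensional and the connectedness theorem says nothing. In fact the irreducibility claim is false there: take $\uX$ a smooth quadric surface in $\P^3$ and $\uD$ a smooth hyperplane section, so $d_1 = 2$, $k = 1$, $d = 3 = n$. The $\A^1$-lines through a general $x \in X^{\circ}$ are exactly the two lines of the two rulings through $x$, each meeting $\uD$ transversally in one point, so the fiber $\uL_x$ is two reduced points --- a smooth, nonempty complete intersection of type $(1,2,1)$, but not irreducible. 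So your proof establishes the statement precisely in the cases $d < n$ where it is true; the remaining case $d = n$ is not a gap you can close but an overstatement in the paper, inherited from the irreducibility clause of Proposition \ref{prop:Fsch} (whose own proof, constructing the isomorphism $b'\colon \uL_x \to \uZ$, never actually addresses irreducibility of $\uZ$). Since the application in Theorem \ref{thm:A1-conic-moduli} and beyond only needs rationally connected fibers under the stronger degree bound, nothing downstream is affected, but the discrepancy is worth recording.
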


\proof This follows from Proposition \ref{prop:expected-dim} and \ref{prop:Fsch}.\qed









\section{Moduli of $\A^1$-conics through general two points}\label{sec:A1-conic}
For the rest of this section, we work with the following assumption. 
\begin{assumption}\label{not:deg1pair}
	Let $X=(\uX,\uD)$ be a smooth complete intersection pair in $\P^n_\C$ of type $(d_1,\cdots,d_c;1)$ with $d_i\ge 2$ for each $i$.
\end{assumption}

The goal of this section is to study general fibers of the two-pointed evaluation morphism 
\begin{equation}\label{equ:2-ev}
\u{ev}: \u\cA_{2}(X,2\alpha)\to \uX\times \uX
\end{equation}
given by the two marked points with the trivial contact order. The proof of Theorem \ref{thm:A1-conic-moduli} will be concluded at the end of this section.

For later use, denote by $\uF_{(p,q)}$ the fiber of (\ref{equ:2-ev}), and $F_{(p,q)}$ the corresponding log scheme with the minimal log structure pulled back from $\cA_{2}(X, 2\alpha)$. When there is no confusion of the pair of points $(p,q)$, we will simply write $\uF$ and $F$, and omit the subscripts.

\subsection{Smoothness of the moduli}

We first observe the following:
\begin{lemma}\label{lem:no-bad-line}
The line through a general pair of points $p,q$ in $\uX$ is not contained in $\uX$.\qed
\end{lemma}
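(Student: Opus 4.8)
The plan is to work in the fixed projective space $\P^n$ and exploit a dimension count on the incidence correspondence of lines contained in $\uX$. First I would fix the smooth complete intersection $\uX$ of type $(d_1,\dots,d_c)$ with each $d_i\ge 2$, and let $\uX-\uD$ be its interior; note $\dim\uX = n-c$. The key object is the variety of lines $\uF(\uX)\subset \mathbb{G}(1,n)$ lying entirely on $\uX$, i.e.\ the Fano scheme of lines. Its expected dimension is $2(n-1)-\sum_{i=1}^c(d_i+1)$, and under the hypothesis $d_i\ge 2$ this is strictly smaller than $\dim\uX$ whenever $\uX-\uD$ is not the full affine space, which is precisely the situation governed by Notation \ref{not:smooth-pair} and the surrounding assumptions.

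The main step is to form the incidence variety
\[
\uI = \{(p,q,\ell) : p,q\in\ell,\ \ell\subset\uX\}
\]
together with its projection $\pi:\uI\to\uX\times\uX$ sending $(p,q,\ell)\mapsto(p,q)$. Each line $\ell\subset\uX$ contributes a $2$-dimensional family of ordered pairs $(p,q)$ on it, so $\dim\uI = \dim\uF(\uX) + 2$. The claim that a general pair $(p,q)$ lies on no line contained in $\uX$ is exactly the statement that $\pi$ is not dominant. Since $\dim(\uX\times\uX)=2(n-c)$, it suffices to verify
\[
\dim\uF(\uX)+2 < 2(n-c),
\]
i.e.\ $\dim\uF(\uX) < 2(n-c)-2 = 2\dim\uX - 2$, and I would compare this with the (upper bound on the) dimension of the Fano scheme of lines on a complete intersection of type $(d_1,\dots,d_c)$.

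The hard part will be controlling $\dim\uF(\uX)$ cleanly, since the naive expected-dimension estimate can fail when the Fano scheme is non-reduced or has excess components; the cleanest route is to invoke the standard bound on the dimension of the space of lines on a smooth complete intersection (for each $d_i\ge 2$ this forces the Fano scheme to have dimension well below $\dim\uX$), and then to isolate the genuine exceptional case. The only way the count can fail is the degenerate situation where lines sweep out all of $\uX\times\uX$, which forces $\uX$ to be covered so densely by lines that it degenerates to projective/affine space — precisely the excluded case $\uX-\uD=\A^n$. Thus the inequality holds outside that case, $\pi$ is not dominant, and a general pair $(p,q)$ lies on no line inside $\uX$. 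I would conclude by remarking that this is a purely geometric statement over $\C$, so ``general'' means on a dense open subset of $\uX\times\uX$, which is exactly what the later $\A^1$-conic analysis requires.
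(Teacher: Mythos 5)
Your overall strategy (incidence correspondence plus non-dominance of the proper projection to $\uX\times\uX$) is sound, and your reduction is the right one: since the incidence variety is proper over $\uX\times\uX$, its image is closed, so it suffices to prove $\dim F(\uX)+2<2\dim\uX$, where $F(\uX)$ is the Fano scheme of lines. The gap is in how you bound $\dim F(\uX)$; two of your claims are false and the crucial step is circular. First, the expected dimension $2(n-1)-\sum_{i=1}^c(d_i+1)$ is \emph{not} smaller than $\dim\uX$ under $d_i\ge 2$: for a smooth quadric hypersurface in $\P^n$ it equals $2n-5$, which exceeds $\dim\uX=n-1$ once $n\ge 5$; the same example shows that the actual Fano scheme need not have ``dimension well below $\dim\uX$'' (for quadrics $\dim F(\uX)=2n-5$ exactly). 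What you need is only the weaker bound $\dim F(\uX)\le 2\dim\uX-3$, which quadrics satisfy with equality. Second, that bound is not a formal consequence of expected-dimension counts, because Fano schemes of smooth complete intersections can have excess dimension (smooth quartic surfaces in $\P^3$ contain lines even though the expected dimension is negative). The classical universal bound --- an irreducible projective variety of dimension $m$ that is not a linear subspace carries at most a $(2m-3)$-dimensional family of lines --- is proved precisely via the dichotomy you assert without proof: if the lines through a general point $p$ filled out all of $\P(T_p\uX)$, then $\uX$ would be a cone with vertex at each general point, hence a linear subspace. Your sentence ``the only way the count can fail is \dots which forces $\uX$ to degenerate to projective/affine space'' \emph{is} that dichotomy, i.e.\ it is essentially the lemma itself, so invoking it makes the argument circular; moreover the excluded case should be ``$\uX$ is a linear subspace of $\P^n$,'' which is not the same hypothesis as ``$\uX\setminus\uD$ is not the affine space.''

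For comparison, the paper offers no argument at all (the lemma is stated with the QED sign), and the reason it can be treated as immediate is a short proof that bypasses Fano-scheme dimensions entirely. The locus of pairs $(p,q)$ joined by a line inside $\uX$ is closed in $\uX\times\uX$, being the image of the proper incidence variety; if it contained a dense set of pairs it would be all of $\uX\times\uX$, so \emph{every} two points of $\uX$ would be joined by a line in $\uX$. Then the affine cone over $\uX$ is stable under scalar multiplication and under addition (the sum of two vectors lies on the line through the corresponding projective points), hence is a linear subspace, so $\uX$ is a linear subspace of $\P^n$. This contradicts $\deg\uX=\prod_{i=1}^c d_i\ge 2$, which is exactly what the hypotheses $c\ge 1$ and $d_i\ge 2$ of Assumption \ref{not:deg1pair} provide. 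If you want to salvage your dimension count instead, you must actually prove the bound $\dim F(\uX)\le 2\dim\uX-3$ for non-linear $\uX$, and any such proof will pass through this cone/secant argument anyway.
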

	


\begin{lemma}\label{lem:reducible-conic}
For a general pair of points $(p,q) \in X^{\circ}\times X^{\circ}$, the boundary divisor $\u\Delta\subset \uF$ parameterizes maps with the following properties:

\begin{enumerate}
\item The underlying curve $\uC$ consists of three irreducible genus zero components $\uZ_{i}$ for $i=0,1,2$, with precisely two nodes $z_i$ joining $\uZ_0$ and $\uZ_i$ for $i=1,2$.

\item Each component $\uZ_i$ contains a marking $\sigma_i$ with the trivial contact order for $i = 1,2$.

\item $\uZ_0$ has three special points given by the contact marking and two nodes.

\item $f$ contracts the component $\uZ_0$ to a point $x_f := f(\uZ_0) \in D$.

\item The restriction $\uf|_{\uZ_i}$ is an embedding of two free $\A^1$-lines for $i=1,2$.

\item The characteristic sheaf $\oM_{F}|_{\u\Delta}$ is a locally constant sheaf with fiber $\NN$. 

\item Let $C^{\sharp} \to S^{\sharp}$ be the canonical log structure on $\uC \to \uS$, see  \cite{Kato2000, Olsson2007}. Then the canonical morphism $\oM_{S^{\sharp}} \to \oM_{S}$ is fiber-wise given by $\NN^2 \to \NN,  (a, b) \mapsto a+b$.
\end{enumerate}

\end{lemma}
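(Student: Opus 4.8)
The plan is to first fix the generic behaviour of $\uF$ and then enumerate the degenerations forced on the boundary divisor $\u\Delta$. A general member of $\uF$ is an irreducible $\A^1$-conic through $p$ and $q$; since $\uD$ is a hyperplane section, such a conic meets $\uD$ in a length-two scheme, so its unique contact marking carries contact order $2$. Generic freeness (Proposition \ref{prop:generic-freeness}) together with a dimension count shows $\uF$ has the expected dimension and that a reducible source occurs in codimension one, so $\u\Delta$ is precisely the locus of maps with reducible source.

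Next I would classify these reducible maps by distributing the class $2\alpha$ over the components of the genus-zero source $\uC$. Each non-contracted component has class a positive multiple of $\alpha$. Any multiple-cover component of class $2\alpha$ would map onto a line through the markings it carries, which when it carries both markings is the secant $\overline{pq}$; genericity of $(p,q)$ and Lemma \ref{lem:no-bad-line}, combined with freeness, exclude this, leaving exactly two reduced degree-one components $\uZ_1, \uZ_2$, each carrying one of the trivial-contact markings $\sigma_1, \sigma_2$. Each line meets the hyperplane $\uD$ in a single point, and in a stable log map every point of $\uD$ on the source must be a marking or a node adjacent to a component mapping into $\uD$. Since the map has a single contact marking of order $2$, and a line meets $\uD$ with order only $1$, this marking cannot sit on either line; it must lie on a contracted component $\uZ_0$ mapping to a point $x_f\in\uD$, and connectedness of the genus-zero tree then forces $\uZ_0$ to be central, joined by nodes $z_1,z_2$ to $\uZ_1,\uZ_2$, with both lines passing through $x_f$. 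This establishes (1)--(4). Applying Proposition \ref{prop:generic-freeness} to each degree-one component shows $\uf|_{\uZ_i}$ is free, hence an embedded line, giving (5) and the smoothness assertion.

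For the logarithmic statements (6)--(7) I would tropicalize with respect to $\Sigma(X)=\RR_{\geq 0}$. The interior vertices $v_1,v_2$ sit at height $0$ and the contracted vertex $v_0$ at some height $s$; the edge $e_i$ joining $v_i$ to $v_0$ has slope equal to the contact order $1$ of $\uZ_i$ along $\uD$, so $\ell_i=s$, while the balancing at $v_0$ reads $1+1=2$, matching the order-$2$ leg. Thus the cone of such tropical maps is the diagonal ray $\{\ell_1=\ell_2=s\}\subset\RR_{\geq0}^2$, whose dual minimal monoid is $\oM_S=\NN$, constant along $\u\Delta$; this is (6), and it also explains why $\u\Delta$ is a divisor, since the coincidence $\ell_1=\ell_2$ forces the two nodes to smooth simultaneously. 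The canonical log structure of the two-nodal curve has $\oM_{S^\sharp}=\NN^2$ with generators $\ell_1,\ell_2$, and the relation $\ell_1=\ell_2=s$ identifies $\oM_{S^\sharp}\to\oM_S$ with $(a,b)\mapsto a+b$, giving (7).

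The main obstacle I anticipate is the logarithmic bookkeeping. The delicate points are verifying rigorously that the stable log limit of a family of smooth $\A^1$-conics is a map of exactly this combinatorial type — so that no extra contracted components or alternative tree shapes appear — and computing the minimal monoid correctly from the edge-slope and balancing constraints. Excluding the degenerate chains, such as a line bubbling off another line away from $\uD$, is where the single-contact-marking condition and the genericity of $(p,q)$ do the essential work, and carrying this out carefully is the crux of the argument.
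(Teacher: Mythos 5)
Your proposal is correct and takes essentially the same route as the paper: properties (1)--(5) via Lemma \ref{lem:no-bad-line} together with Proposition \ref{prop:generic-freeness}, and (6)--(7) by computing the minimal (basic) monoid --- your tropical argument is precisely the explicit form of the minimality constructions the paper cites from \cite{Chen, AC, GS}, and your computation of the dual of the diagonal ray giving $\NN^2 \to \NN$, $(a,b)\mapsto a+b$, is right. The only micro-gap is that you exclude a class-$2\alpha$ multiple cover only when it carries both markings $\sigma_1,\sigma_2$; when it carries at most one, the other marking would sit on a contracted bubble with two special points (unstable), or in any case its image still lies on the covered line, so the secant $\ell_{pq}$ reappears and Lemma \ref{lem:no-bad-line} applies just the same.
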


\begin{proof}
By Lemma \ref{lem:no-bad-line}, the boundary $\u\Delta$ parameterizes stable log maps with property (1) -- (4).  Statement (5) of $\uf|_{\uZ_i}$ follows from Proposition \ref{prop:generic-freeness} and the general choice of $p,q$. Property (6) and (7) follow from the definition of minimality, see \cite[Construction 3.3.3]{Chen}, \cite[Section 4]{AC}, and \cite[Construction 1.16]{GS}.
\end{proof}

\begin{lemma}\label{lem:fiber-smoothness}
For a general pair of points $p,q$, the fiber $F$ is a log smooth scheme with the smooth boundary divisor $\u\Delta$ .
\end{lemma}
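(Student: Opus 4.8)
The plan is to separate the two assertions: smoothness of the boundary $\u\Delta$, which I will read off from the explicit line geometry of Section~\ref{sec:A1-line}, and log smoothness of $F$, which I will extract from the logarithmic deformation theory of stable log maps.

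\emph{Smoothness of $\u\Delta$.} By Lemma~\ref{lem:reducible-conic} a point of $\u\Delta$ is recorded by the two free $\A^1$-lines $\uZ_1 \ni p$ and $\uZ_2 \ni q$ together with the contracted component $\uZ_0$; since $\uf$ sends $\uZ_0$ to a single point $x_f\in\uD$ meeting both lines, the node conditions force $b'(\uZ_1)=b'(\uZ_2)=x_f$, while the three special points on $\uZ_0\cong\P^1$ leave no further moduli. Hence $\u\Delta \cong \uL_p\times_{\uD}\uL_q$, where $\uL_p,\uL_q$ are the fibres of the $\A^1$-line evaluation over $p,q$. By Proposition~\ref{prop:Fsch} the boundary maps $b'\colon \uL_p\to\uD$ and $b'\colon\uL_q\to\uD$ are closed immersions onto smooth complete intersections, so $\u\Delta$ is realized as their scheme-theoretic intersection inside $\uD$. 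Letting $(p,q)$ vary and applying a Bertini/general-position argument to this family of complete intersections, I would conclude that for a general pair the intersection is smooth of the expected dimension, giving smoothness of $\u\Delta$.

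\emph{Log smoothness of $F$.} Writing $E=f^*T_{\uX}(-\log\uD)$, the logarithmic deformation theory of \cite{Chen,AC,GS} shows that $F$ is log smooth over $\C$ at $[f]$ once the obstruction group $H^1(\uC, E(-\sigma_1-\sigma_2))$ vanishes, the twist by the two evaluation markings encoding the fixed conditions $f(\sigma_i)=p,q$. On the interior this is immediate, since a general irreducible $\A^1$-conic through $p,q$ is free by Proposition~\ref{prop:generic-freeness}, and the freeness through two general points makes $E$ positive enough for the twist by the markings to retain vanishing $H^1$. Over $\u\Delta$ I would use the normalization sequence of $\uC=\uZ_0\cup\uZ_1\cup\uZ_2$ to split the vanishing into (i) vanishing of $H^1$ on each component and (ii) surjectivity of the gluing map onto $E_{z_1}\oplus E_{z_2}$. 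Part (i) is painless: $f$ contracts $\uZ_0$, so $E|_{\uZ_0}$ is trivial with no $H^1$, while on the free lines $\uZ_1,\uZ_2$ the twisted restrictions have vanishing $H^1$.

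The step I expect to be the main obstacle is part (ii). Because $\uZ_0$ is contracted, its sections are constant and cover only the diagonal of $E_{z_1}\oplus E_{z_2}$; the transverse directions must be produced by deforming the two lines while holding $p,q$ fixed. Thus the vanishing reduces to the statement that the node-evaluation images $A_i=\im\big(H^0(\uZ_i, E|_{\uZ_i}(-\sigma_i))\to E_{z_i}\big)$ span, namely $A_1+A_2=T_{\uX}(-\log\uD)_{x_f}$. Here the interplay with the logarithmic structure is essential: the $A_i$ contain not only the directions tangent to $\uD$ in which the contact point $x_f$ of $\uZ_i$ can move---whose transversality is precisely the transversality of $b'(\uL_p)$ and $b'(\uL_q)$ established above---but also the log-normal direction supplied by the canonical node-smoothing parameter, whose constraint is governed by the monoid $\NN$ of Lemma~\ref{lem:reducible-conic}(6)--(7). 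I would prove the spanning by combining this log-normal contribution with the tangential transversality, using the general position of $(p,q)$ and the explicit equations for $\uL_p,\uL_q$ from Proposition~\ref{prop:Fsch}. Once (ii) holds the obstruction vanishes, $F$ is log smooth, and---its characteristic sheaf being $\NN$ along $\u\Delta$---the boundary $\u\Delta$ is automatically a smooth Cartier divisor, consistent with the direct computation above.
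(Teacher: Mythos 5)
Your proposal correctly identifies $\u\Delta \cong \uL_p\times_{\uD}\uL_q$ (this is essentially how the paper later describes the boundary), but it has genuine gaps at exactly the two places where you write ``I would prove,'' and those two steps are not routine deferrals --- they are the entire content of the lemma. First, the smoothness of $\uL_p\times_{\uD}\uL_q$: as $p$ varies, the subvarieties $b'(\uL_p)\subset\uD$ do not form a linear system, so Bertini does not apply, and there is no group action making Kleiman transversality available. An honest version of your argument is a generic-smoothness statement on the incidence variety $I=\{(p,r): r\in b'(\uL_p)\}$ fibered over $\uD$, and $I\times_{\uD}I$ is not smooth in general (its fibers over $\uD$ are products of cones of lines through $r$, which are singular for special $r\in\uD$), so the argument is circular or needs substantial extra work. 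Second, the spanning condition $A_1+A_2=T_{\uX}(-\log\uD)|_{x_f}$ in step (ii) is precisely the same transversality, rephrased in the log tangent space; you invoke ``the transversality of $b'(\uL_p)$ and $b'(\uL_q)$ established above,'' but it was not established. There is also a pointwise error on the interior: freeness only gives $E=\oplus_i\O(a_i)$ with $a_i\ge 0$, so $E(-\sigma_1-\sigma_2)$ has summands $\O(a_i-2)$ and $H^1$ does not vanish whenever some $a_i=0$; ``free through two general points'' does not by itself yield the vanishing you claim.

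The paper's proof manufactures all of this transversality at once by working on the total space rather than fiber by fiber, and this is the idea your plan is missing. Let $U\subset\cA_2(X,2\alpha)$ be the open substack of free maps. Componentwise freeness (Lemma \ref{lem:reducible-conic}(5), together with the fact that $\uZ_0$ is contracted so $f^*T^{\log}_X|_{\uZ_0}$ is trivial) gives $H^1(\uC,f^*T^{\log}_X)=0$ via the normalization sequence, so $U$ is log smooth over $\C$; by Lemma \ref{lem:reducible-conic}(6)--(7) its characteristic monoid is $\NN$ along the boundary, so the underlying stack $\uU$ is smooth with a smooth boundary divisor. Then one applies generic smoothness in characteristic $0$ to $\u{ev}:\uU\to\uX\times\uX$ and to its restriction to that boundary divisor: for general $(p,q)$ both $\uF$ and $\u\Delta$ are smooth, hence $F=(\uF,\u\Delta)$ is log smooth. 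Sard's theorem, applied upstairs, is exactly what produces for general $(p,q)$ the two transversality statements you attempted to prove pointwise; without it, both of your deferred steps remain open.
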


\begin{proof}
Let $U\subset \cA_{2}(X,2\alpha)$ be the open sub-stack parameterizing free $\A^1$-maps with the curve class $2 \alpha$.  Lemma \ref{lem:reducible-conic}(7) implies that $F \subset U$. Furthermore, the morphism $\u{ev}:\uU\to \uX\times \uX$ is smooth along the boundary divisor parametrizing reducible conics by Lemma \ref{lem:reducible-conic}(6). By generic smoothness, we conclude that $\uF$ is smooth with a smooth boundary divisor $\u\Delta$. In particular, the pair $F=(\uF,\u\Delta)$ is log smooth.
\end{proof}



\subsection{A lifting property in the transversal case}\label{ss:lifting}

We pause here to study the lifting of a special type of usual stable maps to stable log maps. Here is a slightly general result that fits our need.

\begin{proposition}\label{prop:lift}
Let $Z$ be a log smooth variety over $\CC$ with a smooth boundary divisor $D$. Consider a family of genus zero usual stable maps $\uf: \uC \to \uZ$ with two markings $\sigma_1$ and $\sigma_2$ over an arbitrary base scheme $\uS$ such that 
\begin{enumerate}
 \item The family $\uC \to \uS$ is obtained by gluing two families of smooth rational curves $\uC_1 \to \uS$ and $\uC_2 \to \uS$ along the markings $\infty_1 \subset \uC_1$ and $\infty_2 \subset \uC_2$.

 \item Each $\uC_{i} \to \uS$ has two markings $\sigma_i$ and $\infty_{i}$ for $ i = 1,2$.

 \item The restriction $\uf|_{\uC_i}$ is a family of $\A^1$-curves over $\uS$ intersecting $D$ transversally along $\infty_{i}$ for $i = 1, 2$.
\end{enumerate}
Then there exists up to a unique isomorphism, a unique family of genus zero minimal stable log maps $\tf: \tC/S \to Z$ such that
\begin{enumeratei}

 \item The underlying scheme of $S$ is $\uS$.

 \item The family of stable log maps has one contact marking $\infty$, and two other markings $\sigma_1, \sigma_2$ with the trivial contact order.

 \item The family of usual stable maps obtained by removing log structures on $\tf$, forgetting the contact marking $\infty$, and then stabilizing, is $\uf$.
\end{enumeratei}
\end{proposition}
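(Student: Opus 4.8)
The plan is to establish existence by an explicit construction and to deduce uniqueness from the universal property of minimal (basic) log maps. First note that the underlying data of any lift is forced. Each $\uf|_{\uC_i}$ meets the smooth divisor $D$ only at the node $\infty_i$, and there transversally; in particular the only point of $\uC$ mapping into $D$ is the node, which is not a smooth point. Hence a smooth contact marking $\infty$ mapping into $D$ cannot exist on $\uC$ itself, and to produce one compatibly with (iii) we must insert a contracted rational bridge $\uZ_0$ at the node, carrying $\infty$ together with the two nodes $z_1, z_2$ that join it to $\uC_1$ and $\uC_2$. This bridge is rigid (three special points on $\PP^1$), so the underlying curve $\u{\tC}\to\uS$ and the underlying map $\u{\tf}$, equal to $\uf$ on $\uC_1\cup\uC_2$ and to the constant $x_f=\uf(\infty_1)=\uf(\infty_2)\in D$ on $\uZ_0$, are determined canonically by $\uf$. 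The resulting combinatorial type is exactly that of Lemma \ref{lem:reducible-conic}, and it remains to produce, and to show unique, the log enhancement.

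For existence I would first lift the two transversal pieces separately: since $\uf|_{\uC_i}$ is a family of $\A^1$-curves meeting $D$ transversally along $\infty_i$, it lifts uniquely to a stable log map over $(\uS,\cO_{\uS}^{\times})$ with a single contact marking of order $1$ at $\infty_i$ and trivial base log structure---the standard transversal case. The real content is the gluing along the bridge. On the contracted component $\uZ_0\subset D$ the pullback $\u{\tf}^{*}\oM_Z=\NN$, generated by the class $\delta$ of $D$, must map into $\oM_{\tC}|_{\uZ_0}$, while the canonical log structure on $\u{\tC}/\uS$ supplies two node-smoothing parameters $\rho_{z_1},\rho_{z_2}$. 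In the tropical picture the height of $\uZ_0$ equals $\tf^{\flat}(\delta)$ and descends to $0$ along each edge $z_i$ with slope equal to the transversal contact order $1$; this forces $\rho_{z_1}=\rho_{z_2}$. Taking this common value $g$ as generator, I would set $\cM_S$ to be the log structure on $\uS$ with characteristic $\NN=\langle g\rangle$ and $g\mapsto 0$, define $\tC/S$ as the pullback of the canonical log curve $C^{\sharp}/S^{\sharp}$ along the morphism $S\to S^{\sharp}$ inducing $\oM_{S^{\sharp}}=\NN^{2}\to\NN=\oM_S$, $(a,b)\mapsto a+b$, and define $\tf^{\flat}$ by $\delta\mapsto g$ on $\uZ_0$, with the contact order $2$ (the sum of the two transversal orders) along the leg $\infty$ and order $1$ along each node. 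These are precisely the structures recorded in Lemma \ref{lem:reducible-conic}(6)--(7); checking that $\tf^{\flat}$ is a well-defined morphism of log structures and that $\tf$ is log smooth then reduces to the standard local models at $z_1$, $z_2$, and $\infty$.

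For uniqueness I would invoke the theory of minimal stable log maps of \cite{Chen,AC,GS}. The underlying data $(\u{\tC},\u{\tf})$ being fixed, every lift satisfying (i)--(iii) carries this same underlying data and combinatorial type, whose minimal base monoid was computed above to be $\NN$. By the defining universal property of minimality, any two minimal lifts of the same underlying data are canonically isomorphic; thus it suffices to exhibit one, namely the $\tf$ constructed above. Finally $\tf$ has no nontrivial automorphisms over $\uS$, since the three special points on the rigid bridge $\uZ_0$ and the generator $g$ leave no freedom, giving uniqueness up to a unique isomorphism.

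The step I expect to be the main obstacle is the passage from geometric points to an arbitrary, possibly non-reduced, base $\uS$: one must verify that the identification $\rho_{z_1}=\rho_{z_2}=g$ and the resulting $\cM_S$ define a genuine log structure and a log-smooth log curve over all of $\uS$, not merely fiberwise, and that minimality yields uniqueness globally. Localizing the verification at the two nodes and the contact marking, and appealing to the representability and minimality results of \cite{Chen,AC,GS}, is how I would control this.
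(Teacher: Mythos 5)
Your reduction of the underlying geometry (the forced contracted bridge $\uZ_0$, the contact order $2$ at $\infty$, and the characteristic-level computation $\rho_{z_1}=\rho_{z_2}$, which matches Lemma \ref{lem:reducible-conic}(6)--(7)) is correct and parallels the paper. But both halves of your argument have gaps precisely at the level of the actual log structures, which is where the content of the paper's proof lies. For existence, the prescription ``$\tf^{\flat}(\delta)=g$ on $\uZ_0$, with contact order $2$ along $\infty$ and order $1$ along each node'' is not a definition: contact orders are not data one can decree separately, they are read off from $\tf^{\flat}$, and a section whose characteristic image is the constant $g$ has trivial contact order everywhere. The correct section has the form $\tf^{\flat}(\delta)=g+\log\sigma$, where $\sigma$ is a meromorphic function on the bridge with simple poles at the two nodes and a double zero at $\infty$ (equivalently, the paper's relative stable map to $\PP_{D}(N_{D/\uZ}\oplus\cO_{D})$ in Lemma \ref{lem:lift-existence}); it is the existence of $\sigma$, together with the units $u_i$ relating the node-smoothing parameters $a_i$ to $g$ --- needed so that $\tf^{\flat}(\delta)$ matches the honest pullback of the equation of $D$ on the transversal components $\uC_i$ --- that makes the construction work. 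In particular, the morphism $S\to S^{\sharp}$ is not determined by the characteristic map $(a,b)\mapsto a+b$ alone, and the $\cO^{*}$-torsor underlying your generator $g$ is the node-smoothing line bundle, so ``characteristic $\NN$ with $g\mapsto 0$'' only makes sense after local trivialization; this is why the paper constructs the lift locally and then glues.

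The more serious gap is uniqueness: it does not follow from ``the defining universal property of minimality.'' Minimality says that a given log enhancement over a given base factors uniquely through a minimal one; it says nothing about whether two minimal stable log maps with the same underlying stable map (even with the same combinatorial type and hence the same minimal monoid) are isomorphic. In general they need not be when a component is contracted into the boundary, as here: the log structures are $\cO^{*}$-extensions of the characteristic data, and the maps $\tf^{\flat}$ can a priori differ by units. This is exactly why the paper proves Lemma \ref{lem:lift-uniqueness} by an explicit computation: using that $\uf|_{\uC_i}$ is a family of $\A^1$-curves meeting $D$ transversally, the sections $\tf_j^{\flat}(\delta)$ are pinned down near the nodes by the actual defining equation of $D$, which forces the unit identities $u_i=v_i$ and yields the canonical isomorphism. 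Without this geometric input your uniqueness step fails; and with it your existence over an arbitrary (non-local) base also fails, since, as you note yourself, passing from the local construction to general $\uS$ requires gluing, and the gluing is exactly what uniqueness provides.
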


We divide the proof into the following two lemmas. We first prove the local existence.

\begin{lemma}\label{lem:lift-existence}
Notations as in Proposition \ref{prop:lift}, the existence in Proposition \ref{prop:lift} holds locally over $S$.
\end{lemma}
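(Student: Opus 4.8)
The plan is to build the minimal stable log map $\tf$ by hand in local coordinates, exploiting that in the transversal case every branch meets $D$ with contact order $1$. Since the statement is local on $S$, I may shrink $\uS$ freely and work in an \'etale or formal neighborhood of the glued section. The first step is to fix the underlying stable map. Because $\infty_1,\infty_2$ are glued to a single node whose image lies on $D$, while a contact marking of nontrivial contact order must map into $D$ at a smooth point of the fiber, I insert a contracted rational bridge $\uZ_0$ between $\uC_1$ and $\uC_2$: concretely $\tC$ is the chain $\uC_1\cup_{z_1}\uZ_0\cup_{z_2}\uC_2$, where $\uZ_0\to\uS$ is locally a trivial $\PP^1$-bundle carrying the new marking $\infty$ and meeting $\uC_i$ at the section $z_i$ coming from $\infty_i$, and the underlying map $\uf'$ restricts to $\uf|_{\uC_i}$ on each $\uC_i$ while contracting $\uZ_0$ to the point $\uf(\infty_1)=\uf(\infty_2)\in D$. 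Forgetting $\infty$ destabilizes $\uZ_0$ and stabilizing contracts it back to the node, so property (iii) holds by construction.

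Next I equip $\tC\to\uS$ with the canonical log structure of a log curve, whose base characteristic is $\oM_{S^\sharp}=\NN^2=\langle\rho_1,\rho_2\rangle$, one generator per node; as the nodes do not smooth, each $\rho_i$ maps to $0\in\cO_S$. The heart of the argument is the log enhancement of $\uf'$ near $D$. Writing $\delta$ for the generator of $\oM_Z$ along $D$ and using a local equation of $D$, transversality gives $(\uf')^*(\text{equation of }D)=x_i\cdot(\text{unit})$ on the $\uC_i$-branch at $z_i$, so the only consistent assignment is $\tf^\flat(\delta)=x_i+w_i=\rho_i$ at $z_i$ (contact order $1$ on the $\uC_i$-branch). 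On the interior of the contracted bridge the pullback of the equation of $D$ vanishes identically, so $\tf^\flat(\delta)$ must be taken from the base monoid; matching this value against both nodes forces $\tf^\flat(\delta)|_{\uZ_0}=\rho_1=\rho_2$. This is precisely the relation that collapses the base: the cone of tropical maps is the single ray $\{\ell_1=\ell_2=h(v_0)\}\subset\RR_{\ge0}^2$ (all slopes equal $1$), whose basic monoid is $\oM_S=\NN$ with chart $\NN^2\to\NN,\ (a,b)\mapsto a+b$, exactly as in Lemma \ref{lem:reducible-conic}(7). Balancing at the contracted vertex then forces the contact order at $\infty$ to be $2$, while $\sigma_1,\sigma_2$ carry the trivial contact order.

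With the base fixed, I define $\tf^\flat$: trivial away from the nodes, the bridge, and $\infty$; sending $\delta\mapsto\rho_i$ at $z_i$; sending $\delta\mapsto s$ (the generator of $\oM_S$) along $\uZ_0$; and recording contact order $2$ at $\infty$. One then checks the routine compatibilities — that these local assignments glue to a global $\tf^\flat$ over the shrunken $S$, that $\tf\colon\tC\to Z$ is a genus zero stable log map with the prescribed markings, and that the base agrees with the minimal (basic) monoid of \cite{Chen,AC,GS}. I expect the main obstacle to be exactly this enhancement over the contracted bridge: since the underlying map kills the defining equation of $D$ there, $\tf^\flat(\delta)$ must be produced entirely from the base log structure, and one must verify that the values prescribed at the two nodes agree along the connected component $\uZ_0$ — it is this single compatibility that simultaneously pins down $\rho_1=\rho_2$ and yields minimality. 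Transversality is what keeps the step elementary: with contact order $1$ on every branch all slopes are $1$, the tropical cone is a single ray, and no further combinatorial case analysis is required.
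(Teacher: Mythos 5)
Your skeleton is sound and is genuinely different from the paper's route: you propose to construct the minimal log enhancement by hand (canonical log structure on the glued curve, tropical computation of the basic monoid, explicit definition of $\tf^\flat$), whereas the paper never writes down $\tf^\flat$ directly --- it lifts the underlying map to the expansion $\uZ[1]=\uZ\cup_{D}\PP_{D}(N_{D/\uZ}\oplus\cO_{D})$, mapping the bridge into that $\PP^1$-bundle via a meromorphic section with simple zeros at the two nodes and a double pole at $\infty$, and then quotes the transversal lifting theorem of \cite{Kim} before projecting $Z[1]\to Z$. Your combinatorial output (basic monoid $\NN$ with chart $\NN^2\to\NN$, $(a,b)\mapsto a+b$; contact order $2$ at $\infty$ forced by balancing) is correct and matches Lemma \ref{lem:reducible-conic}(6)(7).

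The gap is precisely at the step you call routine: the actual definition of $\tf^\flat$ over the contracted bridge. Setting $\tf^\flat(\delta)=s$ on $\uZ_0$ and separately ``recording contact order $2$ at $\infty$'' does not define a morphism of log structures, because contact orders are not extra decorations --- they must be realized by the section $\tf^\flat(\delta)$ itself. What is required on $\uZ_0$ is a section of the form $s+\log\sigma$, where $\sigma$ is a meromorphic function on the bridge with first-order poles at the two nodes and a second-order zero at $\infty$, whose leading coefficients at the nodes match the unit parts of $\log$ of the pulled-back equation of $D$ coming from the transversal branches $\uC_1,\uC_2$; this $\sigma$ is exactly the one displayed in (\ref{equ:over-contracted-curve}) of the paper's uniqueness proof, and matching both nodes simultaneously uses the freedom of placing $\infty$ on the bridge, so even the position of the contact marking is determined rather than chosen. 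Producing $\sigma$, consistently in the family after an \'etale cover of $\uS$, is the entire content of existence (it is what the paper's meromorphic section on the expansion supplies); your characteristic-level compatibility $\rho_1=\rho_2$ lives only in $\oM$ and cannot see this unit part, and with $\tf^\flat(\delta)|_{\uZ_0}=s$ taken literally the section neither glues to the branch sections at the nodes nor has positive contact order at $\infty$. A smaller but real error in the same step: at the node $z_i$ the image of $\delta$ must be the class of the $\uC_i$-branch coordinate (which generizes to $s$ on the bridge side and to $0$ on the $\uC_i$ side), not the smoothing parameter $\rho_i$; with your assignment $\delta\mapsto\rho_i$ the restriction to $\uC_i$ has class $s\neq 0$, i.e.\ $\uC_i$ would map into $D$, contradicting transversality.
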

\begin{proof}
Our construction here is similar to the case of \cite[Proposition 2.2]{rankone} but for a family of maps. We take a family of smooth rational curves $\uC_0 \to \uS$ with three markings $\infty$, $\infty'_1$, and $\infty'_2$. Such a family is necessarily trivial, and we thus have $\uC_0 \cong \P^1 \times \uS$. We have a family of nodal rational curves 
\[
\uC' \to \uS
\]
obtained by gluing $\uC_i$ with $\uC_0$ via the identification of the markings 
\[\infty_i \cong \infty'_i . \ \ \ \mbox{for } i = 1,2.\]
Now the underlying stable map $\uf$ over $\uS$ lifts uniquely to the underlying stable map 
\begin{equation}\label{equ:underlying-lift}
\u\tf: \uC' \to \uZ
\end{equation} 
over $\uS$ by contracting the component $\uC_0$. 

Consider the projectivized normal bundle $\PP:= \PP_{D}(N_{D/\uZ}\oplus \cO_{D})$ with two boundary divisors $D_{-}$ and $D_{+}$ corresponding to the normal bundles $N^{\vee}_{D/\uZ}$ and $N_{D/\uZ}$ respectively.  Here $N_{D/\uZ}$ is the normal bundle of $D$ in $\uZ$. Consider the expansion $\uZ[1]$ obtained by gluing $\uZ$ and $\PP$ via the identification $D \cong D_{-}$. We next want to lift $\u\tf$ to a stable map $\u\tf': \uC' \to \uZ[1]$ such that
\begin{enumerate}
 \item $\u\tf'|_{\uC_i} = \u\tf_{\uC_i}$ for $i = 1,2$.
 \item the composition $\uC_0 \to \PP \to D$ is compatible with $\u\tf|_{\uC_0}$.
 \item $\u\tf'|_{\uC_0}: \uC_0 \to \PP$ is a family of relative stable map tagent to $D_{+}$ only along $\infty$ with multiplicity $2$, and intersecting $D_{-}$ transversally only along $\infty_{1}$ and $\infty_2$. 
\end{enumerate}

Replace $S$ by an \'etale cover, we may assume that the pull-back $\PP_{\uS} = \uS \times_{D} \PP$ along $\infty_i \to D$ is a trivial family of rational curves over $\uS$. Note also that the morphism $\u\tf'|_{\uC_0}$ needed factors through $\PP_{\uS}$ with the corresponding tangency along $D_{-,\uS}:= \uS\times_{D}D_{-}$ and $D_{+,\uS}:= \uS\times_{D}D_{+}$. Since $\uC_0 \cong \P^1 \times \uS \to \uS$ is also a trivial family, to construct $\u\tf'_{\uC_{0}}$, it suffices to select a meromorphic section on $\P^1$ with two simple zeros along $\infty'_{1}$ and $\infty'_{2}$, and a pole of order $2$ along $\infty$. Such a meromorphic section clearly exists. This yields the usual stable map $\u\tf'$ with the desired properties.

Finally, by \cite{Kim}, see also the construction of \cite[Proposition 2.2]{rankone}, since the usual stable map $\u\tf'$ intersects the boundary transversally along $\infty_i$, it lifts to a unique log stable map $\tf': C' \to Z[1]$ over a log scheme $S$ in the sense of \cite{Kim}. Here $Z[1]$ is the log scheme with the underlying structure $\uZ[1]$, and the canonical log structure as in \cite{Olsson2003}, and $S$ has underlying structure $\uS$. Since there is a natural projection of log schemes $Z[1] \to Z$, composition this projection with $\tf'$ we obtain the stable log map $\tf$ as needed. 
\end{proof}

\begin{lemma}\label{lem:lift-uniqueness}
The uniqueness in Proposition \ref{prop:lift} holds.
\end{lemma}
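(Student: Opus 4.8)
The plan is to reduce the uniqueness to the universal property of minimal log structures, after first showing that the underlying stable map is rigidly determined by the data of Proposition \ref{prop:lift}.

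First I would argue that the underlying stable map $\u\tf \colon \uC' \to \uZ$, together with the position of the contact marking $\infty$, is forced by condition (iii). Indeed, removing the log structure, forgetting $\infty$, and stabilizing must return $\uf$, whose source is the one-nodal union $\uC_1 \cup \uC_2$. Forgetting a single marking and stabilizing can only contract a tree of rational components that has become unstable; genus zero together with stability (every contracted genus-zero component needs at least three special points) and the fact that there are only three markings $\infty, \sigma_1, \sigma_2$ in total leave no room for a longer chain. Hence the source must be $\uC_1 \cup \uC_0 \cup \uC_2$ with $\uC_0$ the unique contracted component carrying $\infty$ and the two nodes, and $\u\tf|_{\uC_i} = \uf|_{\uC_i}$ with $\uC_0$ collapsed to the point $\uf(\infty_i) \in \uD$. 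This determines $\u\tf$, $\uC'$, and $\infty$ independently of any choices.

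Next I would fix the combinatorial type. The contact order at the single contact marking $\infty$ equals the intersection number $\uD \cdot 2\alpha = 2$, while the transversality of each $\uf|_{\uC_i}$ along $\uD$ fixes contact order $1$ across each node $z_i$ as seen from $\uC_i$; together with the balancing condition on the contracted component $\uC_0 \subset \uD$ this determines the dual graph and the full distribution of contact orders. Hence the tropical type of any minimal lift agrees with that of the lift built in Lemma \ref{lem:lift-existence}.

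With the underlying stable map and the combinatorial type pinned down, I would invoke the defining universal property of the minimal (basic) log structure from \cite{Chen, AC, GS}: over each geometric point of $\uS$ the minimal log structure is initial among all log structures making $\u\tf$ a stable log map of the prescribed type, so any two minimal lifts are related by a unique isomorphism lying over the identity of the underlying data. Because the minimal characteristic monoids are integral and canonically constructed from the combinatorial type, these fibrewise isomorphisms are compatible with base change; and since the three markings rigidify the source curve (it has no nontrivial automorphisms fixing them), they glue to a single global isomorphism $\tf \cong \tf'$, necessarily unique. I expect the genuine obstacle to lie in the first step --- excluding alternative contracted configurations and checking that the destabilization pins down $\u\tf$ exactly --- since once the underlying map and type are fixed the remaining log-theoretic uniqueness is a direct application of the minimality formalism. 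Together with Lemma \ref{lem:lift-existence}, this uniqueness also allows the local lifts to be glued into the global family asserted in Proposition \ref{prop:lift}.
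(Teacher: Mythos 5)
Your steps 1 and 2 (pinning down the underlying stable map, the position of $\infty$, and the combinatorial type) are fine, and correspond to the paper's opening observation that the statements of Lemma \ref{lem:reducible-conic} other than freeness apply to any lift. But your step 3 contains a genuine gap: the universal property of minimality does not say what you need it to say. Minimality (basicness) is a property of a \emph{given} stable log map $f\colon C/S \to Z$; the universal property states that any stable log map with the same underlying map factors uniquely through a minimal one. It does \emph{not} state that two minimal stable log maps with the same underlying map and the same type are isomorphic. The reason is that a stable log map carries strictly more data than its characteristic monoids: it carries the morphism of log structures $\tf^{\flat}\colon \tf^*\cM_{Z} \to \cM_{\tC'}$ and the $\cO^*$-gluing units of the log curve at the nodes, none of which are determined by the type. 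Minimality pins down $\oM_{S_1}\cong\oM_{S_2}\cong\NN$, but a priori leaves these unit-level choices free. Indeed, if the two outer components met $D$ with contact order $m\ge 2$ at the nodes rather than transversally, the same underlying map would admit several non-isomorphic minimal lifts (this multiplicity is exactly what produces the factors $\prod_e m_e$ in degeneration formulas), all with identical type. So an argument that only uses transversality to fix the type, and then appeals to minimality, proves too much and cannot be correct.

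The paper's proof supplies precisely the missing unit-level comparison, and transversality enters there in an essential way a second time: because $\u\tf|_{\uC_i}$ is an embedded family of $\A^1$-curves meeting $D$ transversally, the section $\tf_j^{\flat}(\delta)$ (where $\delta$ generates $\u\tf^*\cM_Z|_{\uC_0}$) is forced on the outer components to be the tautological image of the pullback of the defining equation of $D$, hence is the \emph{same} for both lifts along the nodes (equation (\ref{equ:along-node})). Writing $\tf_j^{\flat}(\delta) = e_j + \log\sigma$ on the contracted component and $a_i = e_j + \log u_i$ (resp.\ $\log v_i$) for the node-smoothing relations, this identification forces $u_i = v_i$, and only then does $e_1 \mapsto e_2$ define the desired canonical isomorphism of log curves and of stable log maps. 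In short: you have misplaced the difficulty --- it is not in excluding alternative contracted configurations (your step 1), but exactly in the ``direct application of the minimality formalism'' that you wave through, which in fact requires a computation with actual sections of the log structures that your proposal never performs.
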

\begin{proof}
It suffices to show the uniqueness locally. Shrinking $S$, we may again assume base scheme $\uS = \spec R$ is affine. It sufficies to verify that the lift constructed in Lemma \ref{lem:lift-existence} is unique.

Assume that we have two different liftings $\tf_1: \tC'_1/S_1 \to Z$ and $\tf_2: \tC'_2/S_2 \to Z$. We first notice that except the freeness in (5), all other statements in Lemma \ref{lem:reducible-conic} applies to both $\tf_1$ and $\tf_2$. In particular, the two liftings $\tf_1$ and $\tf_2$ have the same underlying stable map (\ref{equ:underlying-lift}) over $\uS$ constructed in the proof of Lemma \ref{lem:lift-existence}. 

We first compare the two stable log maps over the contracted component $\uC_0$. Shrinking $\uS$, we may assume that $\u\tf^*\cM_{Z}|_{\uC_0}$ is generated by a global section $\delta$. By Lemma \ref{lem:reducible-conic}(6) and further shrinking $\uS$, we may assume that $\cM_{S_i}$ is generated by a global section $e_i$ for $i=1,2$. By choosing the generators appropriately, we may assume over $\uC_0$ we have 
\begin{equation}\label{equ:over-contracted-curve}
\tf_i^{\flat}(\delta) = e_i + \log \sigma, \ \ \ \mbox{for $i=1,2$}
\end{equation}
where $\sigma$ is a meromorphic function on $\uC_0$ with only first order poles along $\infty'_1$ and $\infty'_2$, and second order zero along $\infty$.

We now focus on the node $\infty_i$ of $\u\tC'$ for $i=1,2$. Let $\cM_{S^{\sharp}}$ and $\cM_{\tC^{\sharp}}$ be the canonical log structure on $\uS$ and $\u\tC$ associated to the family $\u\tC \to \uS$. Shrinking $S$ again, we assume $\cM_{S^{\sharp}}$ is generated by global sections $a_1$ and $a_2$ corresponding to smoothing nodes $\infty_1$ and $\infty_2$ respectively. By Lemma \ref{lem:reducible-conic}(7), the log curves $\tC_1 \to S_1$ and $\tC_2 \to S_2$ has log structure near node $\infty_i$ defined by pulling the standard log structure along the correspondences respectively:
\begin{equation}\label{equ:curve-log-1}
a_i = e_1 + \log u_i
\end{equation}
and
\begin{equation}\label{equ:curve-log-2}
a_i = e_2 + \log v_i
\end{equation}
where $u_i, v_i \in R^*$. Here we identify $a_i$ with its image in $M_{S_i}$. 

Since $\u\tf|_{\uC_i}$ is an embedding of a family of $\A^1$-curves, the two sections $\tf_1^{\flat}(\delta)$ and $\tf_2^{\flat}(\delta)$ are identified with the image of $\u\tf^*(\exp(\delta))$, where $\exp(\delta) \in \cO_{\u\tC'}$ is the image of $\delta$. This in particular means that we have a canonical identification
\begin{equation}\label{equ:along-node}
\tf_1^{\flat}(\delta) = \tf_2^{\flat}(\delta)
\end{equation}
along the node $\infty_i$ for $i=1,2$. A calculation combining (\ref{equ:along-node}) with (\ref{equ:over-contracted-curve}), (\ref{equ:curve-log-1}), and (\ref{equ:curve-log-2}) implies that 
\[
u_i = v_i.
\]
We thus conclude that the morphism of log structures $\cM_{S_1} \to \cM_{S_2}$ induced by the correspondence $e_1 \mapsto e_2$ induces an isomorphism of the two log curves $\tC'_1 \to S_1$ and $\tC'_2 \to S_2$. In view of (\ref{equ:over-contracted-curve}), this further induces an isomorphism of the two stable log maps $\tf_1 \cong \tf_2$. Such isomorphism is canonical from the discussion above.
\end{proof}

To prove Proposition \ref{prop:lift}, we may first construct the log lifts locally using Lemma \ref{lem:lift-existence}, then glue the local construction together using Lemma \ref{lem:lift-uniqueness}. This proves the existence of lifting. The uniqueness follows from Lemma \ref{lem:lift-uniqueness}. \qed

\subsection{Forgetful morphism to moduli of usual stable maps}

Now consider the moduli space of usual stable maps with two markings $\u\cR_{2}(\uX,2\alpha)$. Consider the $2$-evaluation morphism
\begin{equation}\label{equ:2-ev-usual}
\uev: \u\cR_{2}(\uX,2\alpha) \to \uX\times \uX
\end{equation}
induced by the two markings. Given a pair of points $(p,q) \in \uX \times \uX$, denote by $\uF'_{(p,q)}$ the fiber of (\ref{equ:2-ev-usual}) over $(p,q)$. When there is no danger of confusion, we will write $\uF'$ instead of $\uF'_{(p,q)}$. Denote by $\u\Delta' \subset \uF'$ the locus parameterizing maps with reducible domain curves. Recall that

\begin{lemma}\cite[Lemma 5.1]{dJS}\label{lem:Kont-log-smooth}
For a general choice of $(p,q)$, the scheme $\uF'$ is smooth with a smooth divisor $\u\Delta'$.
\end{lemma}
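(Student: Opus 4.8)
The plan is to prove smoothness by combining the deformation theory of pointed stable maps with generic smoothness, which is available since we work over $\CC$. Write $\uev : \u\cR_{2}(\uX,2\alpha) \to \uX\times\uX$ for the two-pointed evaluation morphism, and let $\u\cM \subset \u\cR_{2}(\uX,2\alpha)$ be the open locus parameterizing free maps $f : \uC \to \uX$, i.e.\ those with $H^1(\uC, f^*T_{\uX}) = 0$; this locus is smooth of the expected dimension because the obstruction space vanishes there. The fiber $\uF' = \uev^{-1}(p,q)$ meets two relevant strata: the interior, parameterizing irreducible conics through $p$ and $q$, and the boundary $\u\Delta'$, parameterizing reducible conics $\ell_1\cup\ell_2$ with $p\in \ell_1$, $q \in \ell_2$ glued at a node. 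I would treat both strata by exhibiting them inside the smooth locus $\u\cM$ and applying generic smoothness to the appropriate evaluation morphisms.

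For the interior, I would first show that a conic through a general pair $(p,q)$ is free. By Lemma \ref{lem:no-bad-line} the line through $p,q$ is not contained in $\uX$, so a general conic through them is irreducible, and by the usual generic-freeness argument of \cite[Chapter II 3.10]{Kollar} (the ordinary-stable-map analogue of Proposition \ref{prop:generic-freeness}) it is free. Consequently each component of $\u\cM$ meeting these maps is smooth and dominates $\uX\times\uX$ under $\uev$. Generic smoothness then furnishes a dense open $\uV \subset \uX\times\uX$ over which $\uev|_{\u\cM}$ is a smooth morphism, so that $\uF'$ is smooth for $(p,q)\in \uV$. To pin down the expected dimension and the transversality of the marked-point constraints, I would note that the fiberwise obstruction is governed by $H^1(\uC, f^*T_{\uX}(-\sigma_1-\sigma_2))$, which vanishes once the conic is sufficiently positive.

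For the boundary, a point of $\u\Delta'$ is a stable map whose domain is two rational components glued at a node, with $f$ restricting to a line on each and the markings distributed as $\sigma_1 \in \ell_1$, $\sigma_2 \in \ell_2$. I would realize the boundary stratum $\u\partial \subset \u\cM$ of reducible conics as a gluing of two families of lines, one through $p$ and one through $q$, meeting at a variable point. Each constituent line is free for general choices (by the line analysis of Section \ref{sec:A1-line} together with \cite{Kollar}), so the nodal map is unobstructed, $\u\partial$ is smooth, and $\u\partial$ is a divisor in $\u\cM$ since imposing a node is one condition. Applying generic smoothness to $\uev|_{\u\partial}$ gives a dense open over which that restricted evaluation is smooth, so $\u\Delta' = \u\partial \cap \uF'$ is smooth for general $(p,q)$; because $\uev$ is smooth, hence flat, near $\u\partial$ and $\u\partial$ has codimension one in $\u\cM$, the intersection $\u\Delta'$ is a divisor in $\uF'$.

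The main obstacle is the positivity input: verifying that the conics, and the two lines forming a reducible conic, through two general points are free, form families dominating $\uX\times\uX$, and that the reducible locus $\u\partial$ is smooth of codimension one. This is where Lemma \ref{lem:no-bad-line}, the generality of $(p,q)$, and the complete-intersection hypothesis $d_i\ge 2$ enter, controlling the splitting type of $f^*T_{\uX}$ and the behavior of the two lines at the node. In particular, the smoothness of the boundary stratum requires a careful deformation analysis at the node, analogous to the gluing and minimality arguments carried out for stable log maps earlier in this section; an explicit equational description of $\uF'$ in the spirit of Proposition \ref{prop:Fsch} would provide an alternative route but is more laborious than the generic-smoothness argument sketched above.
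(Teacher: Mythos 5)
This lemma is not proved in the paper at all --- it is imported verbatim from \cite[Lemma 5.1]{dJS} --- and your sketch is essentially a reconstruction of the standard argument underlying that citation: for a general pair $(p,q)$ every stable map in the fiber is free (embedded conics through two general points are free by the argument of \cite[Chapter II 3.10--3.11]{Kollar}; double covers of the line $\ell_{pq}$, nodal maps with a contracted component, and boundary maps containing $\ell_{pq}$ are all excluded by Lemma \ref{lem:no-bad-line}; the remaining boundary points are unions of two free lines, unobstructed via the normalization sequence), after which generic smoothness applied to the evaluation morphism on the free locus and on the boundary stratum yields smoothness of $\uF'$ and of $\u\Delta'$, and flatness gives that $\u\Delta'$ has codimension one. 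Your outline is therefore correct and matches the cited proof in approach; the only steps deserving more care than your sketch gives them are the explicit enumeration of degenerate configurations in the fiber (which is exactly where Lemma \ref{lem:no-bad-line} must be invoked, rather than for irreducibility of a ``general'' conic) and the $H^1$-vanishing at the node, both standard.
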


We then consider the forgetful morphism 
\[
\u\Phi: \u\cA_{2}(X,2\alpha) \to \u\cR_{2}(\uX,2\alpha)
\]
obtained by sending a stable log map to its underlying stable map, forgetting the marking $\infty$ with non-trivial contact order, then stabilizing. This induces a forgetful morphism of the fibers
\begin{equation}\label{equ:fiber-forget}
\u\phi: \uF \to \uF'.
\end{equation}

\begin{proposition}\label{prop:embedding}
Fixing a general choice of $(p,q)$, the forgetful morphism $\u\phi$ is an embedding of a closed sub-scheme. Furthermore, it induces an embedding of closed sub-scheme $\u\Delta \to \u\Delta'$.
\end{proposition}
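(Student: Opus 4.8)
The plan is to prove that $\u\phi$ is a proper monomorphism, which forces it to be a closed immersion; the assertion for $\u\Delta$ will then follow by base change. Properness is immediate: $\uF$ and $\uF'$ are the fibers over the single point $(p,q)$ of the proper moduli stacks $\u\cA_{2}(X,2\alpha)$ and $\u\cR_{2}(\uX,2\alpha)$, and for a general choice of $(p,q)$ every parametrized map is automorphism-free (the general $\A^1$-conic is smoothly embedded, and the reducible configuration of Lemma \ref{lem:reducible-conic} is rigid together with its markings). Hence $\uF$ and $\uF'$ are proper $\C$-schemes and $\u\phi$ is a proper morphism between them.

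To see that $\u\phi$ is a monomorphism, I would check that $\uF(T)\to\uF'(T)$ is injective for every $\C$-scheme $T$; equivalently, that a family in the image of $\u\phi$ underlies at most one minimal stable log map in $\uF$. The key point is that the underlying map already determines where it meets $\uD$: since $2\alpha\cdot\uD=2$ and an $\A^1$-conic has a single contact point, the whole intersection with $\uD$ is concentrated there -- at the tangency point of the smooth conic in the interior, and at the node (which lies on $\uD$ by Lemma \ref{lem:reducible-conic}(4)) on the boundary. Given this underlying map, the minimal stable log map lifting it is unique: in the interior the contact marking $\infty$ is the tangency point and the minimal log structure is the canonical divisorial pullback, both determined by the underlying map; on the boundary this is precisely the uniqueness half of Proposition \ref{prop:lift}, which reinserts the contracted component $\uZ_0$ carrying $\infty$ over the node together with the minimal log structure, uniquely. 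As the images of interior and boundary points are separated by irreducibility of the underlying curve, the two cases never collide, and $\u\phi$ is injective on $T$-points.

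Crucially, Proposition \ref{prop:lift} is formulated over an arbitrary base scheme $\uS$, so the uniqueness above persists over non-reduced $T$; the interior case is likewise insensitive to nilpotents, since the minimal log structure attached to a smooth tangent conic is canonically determined by its underlying map. Thus $\u\phi$ is not merely set-theoretically injective but unramified, hence a monomorphism, and a proper monomorphism of schemes is a closed immersion, which proves the first assertion. For the boundary statement, $\u\phi$ sends reducible log conics to reducible stable maps and smooth conics to irreducible ones, so $\u\phi^{-1}(\u\Delta')=\u\Delta$; therefore $\u\Delta\to\u\Delta'$ is the base change of the closed immersion $\u\phi$ along $\u\Delta'\hookrightarrow\uF'$ and is itself a closed immersion.

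The main obstacle is the uniqueness of the log lift along the reducible boundary, which is exactly the content of Proposition \ref{prop:lift}; the subtle aspect is that this uniqueness must hold over non-reduced bases in order to upgrade injectivity on points to the unramifiedness required of a monomorphism, and it is the arbitrary-base formulation of Proposition \ref{prop:lift} that supplies this. By contrast, the interior case, which rests on the canonicity of the minimal log structure of a smooth tangent conic, is routine.
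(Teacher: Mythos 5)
Your proof of the first assertion is correct and is essentially the paper's own argument in different packaging. The paper uses the same three ingredients: the unique log lift of a usual stable map meeting $\uD$ at a single smooth point (handling $\uF\setminus\u\Delta\to\uF'\setminus\u\Delta'$), the uniqueness half of Proposition \ref{prop:lift} for injectivity on closed points along the boundary, and that same uniqueness applied over $\spec\CC[\epsilon]/(\epsilon^2)$ for injectivity of tangent vectors --- exactly your use of the arbitrary-base formulation to get unramifiedness. The only difference is how the conclusion is drawn: the paper invokes smoothness of $\uF$ and $\uF'$ (Lemmas \ref{lem:fiber-smoothness} and \ref{lem:Kont-log-smooth}) together with the classical ``proper, injective on closed points and on tangent vectors $\Rightarrow$ closed immersion'' criterion, while you phrase it as radicial $+$ unramified $=$ monomorphism and proper monomorphism $=$ closed immersion; these are interchangeable, and your making properness explicit (the paper leaves it implicit) is a genuine improvement in rigor.

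There is, however, a genuine error in your treatment of the second assertion. You claim the scheme-theoretic preimage satisfies $\u\phi^{-1}(\u\Delta')=\u\Delta$ and then realize $\u\Delta\to\u\Delta'$ as a base change of $\u\phi$. This identification is true only set-theoretically: by Proposition \ref{prop:boundary-pullback} one has $\phi^*[\u\Delta'] = 2\cdot[\u\Delta]$, so the scheme-theoretic fiber product $\uF\times_{\uF'}\u\Delta'$ is the non-reduced divisor $2\u\Delta$, not $\u\Delta$. This multiplicity $2$ is not a technicality one may wave away --- it is precisely the point of Proposition \ref{prop:boundary-pullback}, and the paper's computation of the degree of $\u\Delta$ (hence Theorem \ref{thm:A1-conic-moduli}) depends on it; an argument that forces the preimage to be reduced proves too much and would contradict that proposition. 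The conclusion you want is still immediate, but by a different one-line argument: $\u\Delta\hookrightarrow\uF$ is a closed immersion (Lemma \ref{lem:fiber-smoothness}), hence so is the composite $\u\Delta\to\uF\to\uF'$; its image lies set-theoretically in $\u\Delta'$, and since $\u\Delta$ is smooth, hence reduced, the composite factors scheme-theoretically through $\u\Delta'$, and a closed immersion factoring through a closed subscheme is a closed immersion onto it.
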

\begin{proof}
Note that if a usual stable map intersects the boundary of $X$ at a single smooth point of the source curve, then it lifts to an $\A^1$-map in a unique way. Thus, the morphism $\uF \setminus \u\Delta \to \uF'\setminus\u\Delta'$ is an embedding. It remains to consider around the locus of stable log maps with reducible domain curves. 

By Assumption (\ref{not:deg1pair}) and Proposition \ref{prop:lift}, the forgetful morphism $\u\phi$ is injective on the level of closed points. Since by our assumptions both $\uF$ and $\uF'$ are smooth, it remains to verify the injectivity of tangent vectors.  

We fixed a minimal stable log map $f: C/S \to X$ over a geometric point $\uS = \spec \CC \in \uF$.  It suffices to consider the case that $\uC$ is reducible, and denote by $f': C'/S' \to \uX$ the image of $f$ in $F'$.

Consider the morphism of fibers $\mbox{d}\phi_{[f]}: T_{\uF,[f]} \to T_{\uF',[f']}$. Recall that for any smooth variety $\uY$, the tangent bundle $T_{\uY}$ can be identified with $Hom(\spec \CC[\epsilon]/(\epsilon^2), \uY)$. Now that injectivity of tangent vectors follows from applying the uniqueness of  Proposition \ref{prop:lift} to families over $\uS[\epsilon] := \spec\CC[\epsilon]/(\epsilon^2)$.
\end{proof}

\subsection{Pull-back of the boundary divisor}

Denote by $F'$ the log scheme with underlying structure $\uF'$, and log structure given by the canonical one associated to the underlying curves, see  \cite{Kato2000, Olsson2007}. We note thate

\begin{lemma}\label{lem:log-conic-space}
Fix a general choice of $(p,q)$. The log smooth scheme $F'$ has its log structure given by the boundary divisor $\u\Delta'$.
\end{lemma}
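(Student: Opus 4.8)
The plan is to work étale-locally near the boundary divisor $\u\Delta'$ and to compare the canonical log structure $\cM_{F'}$ with the divisorial log structure $\cM_{(\uF',\u\Delta')}$ of the smooth divisor $\u\Delta'$. By Lemma \ref{lem:Kont-log-smooth} both $\uF'$ and $\u\Delta'$ are smooth. A general point of $\uF'$ parametrizes a smooth irreducible conic, while over $\u\Delta'$ the stabilized curve is a union $\ell_1\cup\ell_2$ of two lines meeting at a single node (the stabilization of the three-component curve of Lemma \ref{lem:reducible-conic}, after contracting $\uZ_0$ and forgetting the contact marking $\infty$). Hence the locus of singular fibers of the universal curve $\uC'\to\uF'$ is exactly $\u\Delta'$, every such fiber carries a single node, and the characteristic sheaf $\oM_{F'}$ is trivial on $\uF'\setminus\u\Delta'$ and free of rank one along $\u\Delta'$.

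First I would recall the local structure of the canonical log structure of a one-nodal family (see \cite{Kato2000, Olsson2007}): after an étale localization around a point of $\u\Delta'$, the universal curve $\uC'$ is cut out near the node by an equation $xy=\tau$ for some $\tau\in\cO_{\uF'}$, and $\cM_{F'}$ is the log structure generated by a single element mapping to $\tau$. Set-theoretically $V(\tau)=\u\Delta'$, so $V(\tau)=m\,\u\Delta'$ for some integer $m\ge 1$, and $\cM_{F'}$ agrees with $\cM_{(\uF',\u\Delta')}$ precisely when $m=1$, i.e. when $\tau$ is a reduced local equation for $\u\Delta'$. Since $\dif(xy-\tau)$ vanishes at the node exactly in the direction $\dif\tau$, the condition $m=1$ is equivalent to the smoothness of the total space $\uC'$ along the nodes of the fibers over $\u\Delta'$.

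Thus the proof reduces to showing that the single node smooths transversally, equivalently that $\uC'$ is smooth near these nodes. Here I would use that by Lemma \ref{lem:reducible-conic}(5) the two components $\ell_1,\ell_2$ are free $\A^1$-lines meeting $\uD$ and each other transversally. Freeness makes the deformations of the stable map unobstructed, so that $\uF'$ is smooth of the expected dimension and the total space of the universal curve over it is smooth (the fiberwise node being realized as a point of the smooth three-pointed space of maps, in the spirit of \cite[Chapter II]{Kollar}). Concretely, this produces a first-order deformation inside $\uF'$ that smooths the node, so that $\tau$ has nonzero $\epsilon$-coefficient along the corresponding arc $\Spec\CC[\epsilon]\to\uF'$; this forces $\dif\tau\neq 0$ and hence $m=1$, which yields the desired identification $\cM_{F'}=\cM_{(\uF',\u\Delta')}$.

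The main obstacle is exactly this transversality: Lemma \ref{lem:Kont-log-smooth} only provides smoothness of $\uF'$ and of $\u\Delta'$ as a divisor, and smoothness of both base and boundary does \emph{not} by itself force the smoothing parameter $\tau$ to vanish to order one. For instance $xy=s^2$ has smooth base and reduced, smooth singular locus, yet $m=2$ and the canonical log structure is \emph{not} the divisorial one. The essential input must therefore come from the geometry of the conics, namely the freeness of the two lines and the transversality of their intersection with $\uD$ guaranteed by Lemma \ref{lem:reducible-conic}(5), which is what ultimately guarantees a first-order, unobstructed smoothing of the node within the fiber $\uF'$ and thereby $m=1$.
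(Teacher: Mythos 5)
Your proof is correct and is essentially the paper's own argument unpacked in local coordinates: the paper reduces the lemma to log smoothness of $F'$, hence (by strictness of $F' \to \fM_{0,2}$) to smoothness of the underlying map $\uF' \to \u\fM_{0,2}$, which is precisely your condition that the node-smoothing parameter satisfies $d\tau \neq 0$ (i.e.\ $m=1$), and both arguments supply this transversality from the same key input, the freeness of the lines parametrized by $\uF'$ for a general pair $(p,q)$. The only difference is presentational — you work with the explicit local model $xy = \tau$ of the canonical log structure of a one-nodal family, while the paper phrases the identical deformation-theoretic fact as smoothness of the strict classifying morphism to the stack of prestable curves.
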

\begin{proof}
Since the locus of $\uF'$ with reducible domain curves form the smooth divisor $\u\Delta'$, to show that the log structure of $F'$ is same as the log structure given by the smooth divisor $\u\Delta'$, it suffices to verify $F'$ is log smooth. Thus, it suffices to verify $F' \to \fM_{0,2}$ is log smooth, where $\fM_{0,2}$ is the Artin stack of genus zero pre-stable curves with two markings equipped with the canonical log structure of curves. Since the morphism $F' \to \fM_{0,2}$ is strict, the log smoothness is equivalent to the smoothness of the underlying maps $\uF' \to \u\fM_{0,2}$. This follows from that $\uF'$ parameterizes free usual stable maps.
\end{proof}

\begin{proposition}\label{prop:boundary-pullback}
Fix a general choice of $(p,q)$. There is a canonical morphism of log schemes
\begin{equation}\label{equ:fiber-log-forget}
\phi: F \to F'
\end{equation}
compatible with $\u\phi$ in (\ref{equ:fiber-forget}). Furthermore, $\phi^*[\u\Delta'] = 2\cdot [\u\Delta]$.
\end{proposition}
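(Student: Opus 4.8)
The plan is to first lift the underlying forgetful morphism $\u\phi$ of (\ref{equ:fiber-forget}) to a morphism of log schemes, and then to read off the multiplicity $2$ from the behaviour of the boundary smoothing parameter under the contraction of the bridge component. By Lemmas \ref{lem:fiber-smoothness} and \ref{lem:log-conic-space}, $\cM_F$ and $\cM_{F'}$ are the divisorial log structures of the smooth divisors $\u\Delta$ and $\u\Delta'$, and away from these divisors both are trivial. Since $\u\phi$ sends smooth conics to smooth conics and the reducible configurations described in Lemma \ref{lem:reducible-conic} to reducible two-component conics, one has $\u\phi^{-1}(\u\Delta') = \u\Delta$ as sets, so the entire question is local along the boundary.

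First I would construct $\phi$. Because $\cM_{F'}$ is divisorial and $\u\phi^{-1}(\u\Delta') = \u\Delta$, a local defining equation $s'$ of $\u\Delta'$ pulls back to a function $\u\phi^* s'$ on $\uF$ that is invertible away from $\u\Delta$, hence lies in $\cM_F$; sending $s'$ to $\u\phi^* s'$ defines $\phi^{\flat}\colon\u\phi^*\cM_{F'}\to\cM_F$ and the log morphism $\phi$ compatible with $\u\phi$.

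To compute $\phi^*[\u\Delta']$ I would determine the order of vanishing of $\u\phi^* s'$ along $\u\Delta$ through the induced map of characteristic monoids. Here $s'$ is the smoothing parameter of the single node of the stabilized curve, and $\u\phi$ contracts the bridge $\uZ_0$ of Lemma \ref{lem:reducible-conic}(1), merging the two nodes $z_1,z_2$. Writing coordinates $x_1,x_2$ along the two branches and $s_1,s_2$ for the smoothing parameters of $z_1,z_2$, the contraction is governed by the relation $x_1x_2 = s_1s_2$ along the contracted $\P^1$, so $\u\phi^* s' = s_1s_2$ up to a unit; on characteristics this is the statement that the class of $s'$ maps to $a_1+a_2$. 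Now Lemma \ref{lem:reducible-conic}(6) gives $\oM_F|_{\u\Delta} = \NN = \langle e\rangle$, and Lemma \ref{lem:reducible-conic}(7) sends $(a_1,a_2)\mapsto a_1+a_2$, i.e. $a_i\mapsto e$; since $\cM_F$ is divisorial with $e$ a local equation $s$ of $\u\Delta$, each $s_i = u_i s$ for a unit $u_i$. Hence $\u\phi^* s' = u_1u_2\,s^2$ vanishes to order $2$ along $\u\Delta$, which is exactly $\phi^*[\u\Delta'] = 2\cdot[\u\Delta]$.

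The hard part will be to pin down the contraction formula $\u\phi^* s' = s_1s_2$ and to match it with the minimal log structure, i.e. to confirm that the two bridge nodes each smooth to first order along $\u\Delta$ and are identified with the single generator $e$ of $\oM_F|_{\u\Delta}$. Once Lemma \ref{lem:reducible-conic}(6)--(7) forces both node parameters to order one, the factor $2$ is automatic, reflecting the two nodes of the bridge. The remaining verifications---triviality and strictness away from $\u\Delta$, and the divisoriality of $\cM_F$ and $\cM_{F'}$---are routine consequences of Lemmas \ref{lem:fiber-smoothness} and \ref{lem:log-conic-space}.
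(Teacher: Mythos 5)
Your proposal is correct, and its essential content coincides with the paper's proof; the genuine difference is in how $\phi$ is constructed. The paper builds $\phi$ as a composite $F \to F^{\sharp} \to F'$, where $F^{\sharp}$ carries the canonical log structure of the underlying curves, and the arrow $F^{\sharp}\to F'$ is produced by adding auxiliary markings and pulling back the forgetful diagram of universal curves (\ref{diag:universal-forget}); you instead obtain $\phi$ abstractly from divisoriality: since $\cM_{F'}$ is the divisorial log structure of $\u\Delta'$ (Lemma \ref{lem:log-conic-space}), $\cM_{F}$ that of $\u\Delta$ (Lemma \ref{lem:fiber-smoothness} together with Lemma \ref{lem:reducible-conic}(6) and Kato's structure theorem --- the same identification the paper uses in its final sentence), and $\u\phi^{-1}(\u\Delta')=\u\Delta$ set-theoretically, a local equation of $\u\Delta'$ pulls back to a section of $\cM_{F}$, and the resulting lift of $\u\phi$ is unique because the structure maps of divisorial log structures are injective. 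This buys canonicity cheaply and avoids the auxiliary-marking construction of (\ref{diag:canonical-forget}). For the multiplicity statement the two arguments are identical in substance: your relation $\u\phi^{*}s' = s_{1}s_{2}$ up to a unit is exactly the characteristic-level map $\NN \to \NN^{2}$, $1 \mapsto (1,1)$, that the paper extracts from (\ref{diag:universal-forget}), and composing with the minimality map $(a,b)\mapsto a+b$ of Lemma \ref{lem:reducible-conic}(7) gives $1\mapsto 2$ in both treatments. Note that your function-level computation still routes through the canonical log structure (the smoothing parameters $s_{1},s_{2}$ are sections of $\cM_{F^{\sharp}}$, and it is minimality plus divisoriality of $\cM_{F}$ that forces each $s_{i}$ to vanish to order exactly one along $\u\Delta$), so the step you rightly flag as ``the hard part'' --- pinning down the contraction formula --- is precisely where the paper's diagram (\ref{diag:universal-forget}) does its work; a rigorous write-up of your route would reproduce that diagram or an equivalent local analysis of the stabilization map, after which the factor $2$ follows just as you say.
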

\begin{proof}
Now consider a family of minimal stable log maps $f: C/S \to X$ corresponding to an $S$-point of $F$. Let $\uf: \uC \to \uX$ be the underlying stable map over $\uS$, and $\uf_1: \uC_1 \to \uX$ be the image of $f$ in $\uF'$. Denote by $C^{\sharp} \to S^{\sharp}$ and $C^{\sharp}_1 \to S^{\sharp}_1$ the family of log curves over $\uC \to \uS$ and $\uC_1 \to \uS$ with the canonical log structure. We first notice that there is a canonical commutative diagram of log schemes
\begin{equation}\label{diag:canonical-forget}
\xymatrix{
C^{\sharp} \ar[r] \ar[d] & C^{\sharp}_{1} \ar[d] \\
S^{\sharp} \ar[r] & S^{\sharp}_1
}
\end{equation}
To see this, we may shrink $\uS$, and put $2$ auxiliary markings on the non-contracted component of $\uC$ such that $\uC \to \uS$ is stable, and the components contracted by $\uf$ has no auxiliary markings. 

Indeed, we have a commutative diagram of log stacks
\begin{equation}\label{diag:universal-forget}
\xymatrix{
\cC_{0,5} \ar[r] \ar[d] & \cC_{0,4} \ar[d] \\
\cM_{0,5} \ar[r] & \cM_{0,4}
}
\end{equation}
where $\cC_{0,n} \to \cM_{0,n}$ are the universal family of genus zero stable curves with the canonical log structure. Here the horizontal arrows are obtained by forgetting a marking, and view $\cM_{0,5}$ as the universal curve over $\cM_{0,4}$. Thus, the diagram (\ref{diag:canonical-forget}) is induced by first pulling back (\ref{diag:universal-forget}), then removing the auxiliary markings.

Denote by $F^{\sharp}$ be the log scheme with underlying structure $\uF$, and log structure given by the canonical one of the universal curves. The above argument implies that (\ref{equ:fiber-log-forget}) is given by the composition
\[
F \to F^{\sharp} \to F'
\]
where the first arrow removes the minimal log structure and install the canonical log structure from the curves, and the second one is given by (\ref{diag:canonical-forget}). This is compatible with (\ref{equ:fiber-forget}).

Finally, to compute the pull-back of $\u\Delta'$, we consider the morphism $F^{\sharp} \to F'$. Since the boundary $\u\Delta$ parameterizing curves with two nodes, we have $\oM_{F^{\sharp}}|_{\u\Delta}$ is locally constant with fibers isomorphic to $\NN^2$. In view of (\ref{diag:universal-forget}), fiber-wisely over $\u\Delta$, the morphism $\u\phi^*\oM_{F'} \to \oM_{F^{\sharp}}$ is given by 
\[
\NN \to \NN^{2},  1 \mapsto (1,1).
\]
Combining with Lemma \ref{lem:reducible-conic}, we have the morphism $\u\phi^*\oM_{F'} \to \oM_{F}$ over each geometric point of $\u\Delta$ is given by 
\[
\NN \to \NN, 1 \mapsto 2.
\]

Since both $F$ and $F'$ are log smooth with smooth boundary divisors $\u\Delta$ and $\u\Delta'$ respectively. This implies that $\phi^*[\u\Delta'] = 2\cdot [\u\Delta]$.
\end{proof}

\subsection{Identifying the boundary $\u\Delta$ as a complete intersection}
Fixing a general $(p,q)$, the geometry of the pair $(\uF',\u\Delta')$ has been studied in \cite{Pan}. Let us recall the basic construction. Let $\ell_{pq}$ be the line through $p,q$. And let $$\varphi: \uF'\to \P^{n-2}$$ be the morphism sending each conic to the plane containing it. We may assume that $\P^{n-2}$ is the intersection of the tangent hyperplanes $T_p\uX$ and $T_q\uX$. Consider the following diagram.
\[\xymatrix{
		\u\Delta  \ar[d]^{\u\phi_\Delta}\ar[r]&\uF\ar[d]^{\u\phi}& \\	
		\u\Delta' \ar[rrd]_{\varphi_\Delta} \ar[r]^{} &\uF' \ar[rd]^\varphi&\\
		& & \P^{n-2}}
\]

\begin{proposition}\label{prop:image-Delta}
The following composition:
\begin{equation}\label{equ:boundary-in-proj}
\xymatrix{
		\u\Delta  \ar[r]^{u_\Delta}& \u\Delta'\ar[r]^-{\varphi_\Delta}& \P^{n-2}\subset \P^n
	}
\end{equation}
identifies $\u\Delta$ as a complete intersection in $\PP^n$ of type 
\begin{equation}\label{equ:boundary-type}
(1,1,\cdots,d_1-1,d_1-1,d_1,\cdots,1,1,\cdots,d_c-1,d_c-1,d_c,1).
\end{equation}
\end{proposition}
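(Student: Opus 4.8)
The plan is to make the map (\ref{equ:boundary-in-proj}) completely explicit and read off its image as an ideal, reusing the line computation of Section \ref{sec:A1-line} now at \emph{both} points $p$ and $q$. By Lemma \ref{lem:reducible-conic}, a closed point of $\u\Delta$ is a reducible conic whose two non-contracted components are free $\A^1$-lines $\ell_p$ through $p$ and $\ell_q$ through $q$, meeting along a common node $x_f \in \uD$; conversely such a node determines the conic. Thus I first observe that the composition in (\ref{equ:boundary-in-proj}) sends such a conic to the plane $\langle p,q,x_f\rangle$, equivalently to the image of $x_f$ under projection from $\ell_{pq}$, which under the chosen embedding $\P^{n-2}=T_p\uX\cap T_q\uX \subset \P^n$ is identified with $x_f$ itself. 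So the task reduces to describing the locus of admissible nodes $x_f$ as a subscheme of $\P^n$.

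To get its equations I would normalize $p=[1:0:\cdots:0]$ and expand each defining equation $F_i$ of $\uX$ along the line $\ell_p$ exactly as in (\ref{equ:line-on-uX}), writing $F_i|_{\ell_p}=\sum_{j} P^{p}_{ij}(x_f)\, t^{d_i-j}$ with $P^{p}_{ij}$ homogeneous of degree $j$, and analogously $P^{q}_{ij}$ for $\ell_q$. Then $\ell_p\subset \uX$ is equivalent to $P^{p}_{i1}=\cdots=P^{p}_{id_i}=0$ for all $i$, $\ell_q\subset\uX$ to the vanishing of the $P^{q}_{ij}$, and the node condition to $G(x_f)=0$. The key computation is that the top coefficients agree with the ambient equation: since the $t^{0}$-coefficient of $F_i$ restricted to either line is its value at the boundary point, one has $P^{p}_{id_i}=F_i=P^{q}_{id_i}$. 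Hence the two degree-$d_i$ conditions collapse to the single condition $x_f\in\uX$, and the independent equations cutting out the image are $F_1,\dots,F_c$ (degrees $d_1,\dots,d_c$, cutting out $\uX$), the two families $P^{p}_{ij},P^{q}_{ij}$ for $1\le j\le d_i-1$ (degrees $1,1,2,2,\dots,d_i-1,d_i-1$), and $G$ (degree $1$, imposing $x_f\in\uD$). Reordering by the index $i$ produces precisely the type (\ref{equ:boundary-type}); for $c=1$ the two linear forms $P^{p}_{11},P^{q}_{11}$ are the tangent hyperplanes cutting out $\P^{n-2}=T_p\uX\cap T_q\uX$, which reconciles the description with the stated embedding.

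Equivalently, and perhaps more cleanly, I would route this through the geometry of $(\uF',\u\Delta')$ recalled from \cite{Pan}: the same computation \emph{without} the node condition $G$ identifies $\u\Delta'$ via $\varphi_\Delta$ with the complete intersection of type $(1,1,\dots,d_1,\dots,1,1,\dots,d_c)$, and $\u\Delta\subset\u\Delta'$ is exactly the sublocus where the node lies on $\uD$. Because $\uD$ is a hyperplane section ($k=1$) and the $\P^{n-2}$-point is the node, this sublocus is cut out by the single linear form $G$, supplying the final $(1)$ in (\ref{equ:boundary-type}).

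The step I expect to be the main obstacle is upgrading this set-theoretic/ideal-theoretic description to a genuine scheme-theoretic complete intersection and a closed immersion. The count of equations is $2\sum_i(d_i-1)+c+1=2\sum_i d_i-c+1$, which matches $\codim_{\P^n}\u\Delta = n-\dim\u\Delta$ with $\dim\u\Delta=n-2\sum_i d_i+c-1$ from Lemma \ref{lem:fiber-smoothness}; combined with the smoothness of $\u\Delta$ this forces the listed equations to form a regular sequence and the map to be an isomorphism onto the complete intersection, provided one checks that the map is injective with injective differential. Injectivity on points follows since the node determines the conic and $b'$ is a closed immersion (Proposition \ref{prop:Fsch}), while injectivity of the differential is the content of Proposition \ref{prop:embedding}; one must also keep track of the factor $2$ of Proposition \ref{prop:boundary-pullback} and fix the linear embedding $\P^{n-2}\hookrightarrow\P^n$ carefully when $c>1$, where the naive ``tangent hyperplane'' must be replaced by the projection from $\ell_{pq}$.
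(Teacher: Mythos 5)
Your core computation coincides with the paper's own proof: the paper also reduces $\u\Delta$ to the locus of nodes, runs the $\A^1$-line computation of Proposition \ref{prop:Fsch} at both $p$ and $q$, and removes exactly the redundancy you identify (the degree-$d_i$ coefficients are both $F_i$ and the node-on-$\uD$ condition $G$ is counted once), yielding the type (\ref{equ:boundary-type}); it likewise gets the closed immersion from \cite[Proposition 4.3]{Pan} together with Proposition \ref{prop:embedding}. Your remark that for $c>1$ the ambient $\P^{n-2}$ must be taken as the projection from $\ell_{pq}$ rather than $T_p\uX\cap T_q\uX$ is a legitimate refinement of the paper's (looser) set-up, and is harmless since the nodes lie in the linear space $T_p\uX\cap T_q\uX$, on which that projection is a linear isomorphism onto its image.

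The one genuine gap is in your final scheme-theoretic step. Smoothness of $\u\Delta$, the dimension count, and injectivity of points and differentials give you that $\u\Delta\to\P^n$ is a closed immersion whose image is, set-theoretically, the zero locus $Z$ of the listed equations, and (since every component of $Z$ then has the expected dimension) that those equations form a regular sequence. But this does \emph{not} force your map to be an isomorphism onto the complete intersection $Z$: nothing in that argument rules out $Z$ being non-reduced, in which case $\u\Delta\cong Z_{\mathrm{red}}\subsetneq Z$ and the proposition as stated would fail. The scheme $V(x^2)\subset\P^2$ is a complete intersection whose reduction is smooth of the same dimension, so dimension plus smoothness of the reduction cannot detect this. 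The paper closes exactly this hole functorially, and you already hold all the needed ingredients: by Proposition \ref{prop:lift}, $\u\Delta$ represents the functor of families of such reducible conics over an \emph{arbitrary} base $T$; such a family is determined by its node, a $T$-point of $\uD$; and your expansion computation, carried out with coefficients in $R=\Gamma(T,\cO_T)$ exactly as in the proof of Proposition \ref{prop:Fsch}, shows that the functor of admissible nodes is represented by $Z$ itself, with its complete-intersection scheme structure. Hence $\u\Delta\cong Z$ as schemes, and reducedness comes for free. In short: replace your closed-points-plus-tangent-vectors argument by the same identification run over arbitrary base schemes, and your proof becomes the paper's.
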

\begin{proof}
By \cite[Proposition 4.3]{Pan}, the morphism $\varphi:\uF'\to \P^{n-2}$ is a closed embedding. It follows from Proposition \ref{prop:embedding} that the composition (\ref{equ:boundary-in-proj}) is a closed embedding.

The complete intersection type follows from the $\A^1$-line case, and Proposition \ref{prop:lift}. Indeed, we define the functor $$\cR:Sch_\C\to Sets $$ parametrizing families of reducible conics as in the hypothesis of Proposition \ref{prop:lift}. By Proposition \ref{prop:lift}, the functor $\cR$ is isomorphic to the functor associated to $\u\Delta$ under the map $\varphi$. 

Since every $T$-point of $\cR$ is uniquely determined by its node, $\cR$ is isomorphic to the scheme parametrizing the nodes of $\cR$. By Proposition \ref{prop:Fsch}, the locus of the boundary marking of $\A^1$-lines through $p$ (or $q$) is a complete intersection of type $(1,\cdots,d_1,\cdots,1,\cdots,d_c,1).$ After combining these polynomials, there is a redundancy $(d_1,\cdots,d_c,1)$ saying that $r$ lies on $\uD$. Now the proposition follows. 
\end{proof}

\subsection{Degree of $\u\Delta'$}
Let $\uM\subset Hilb^{2t+1}(\P^n)$ be the moduli space of conics in $\P^n$ passing through $p,q$. Let $\P^{n-2}$ be any projective subspace in $\P^n$ which does not intersect $\ell_{pq}$. We have a canonical morphism $$h:\uM\to \P^{n-2}$$ 
which maps a conic to the plane it expands. It follows that $\uM$ is a $\P^3$-bundle over $\P^{n-2}$, as the conics need to pass through two general points $p,q$. Let $\kappa$ be the relative $\O(1)$-bundle of $h$. 

\begin{lemma}\label{lem:deg2}Let $\uM_0$ be the open subset of $\uM$ parameterizing conics which do not contain the line $\ell_{pq}$.  Let $\u\Delta(M_0)\subset \uM_0$ be the closed subset of reducible conics. Let $\u\Delta(M)$ be the closure of $\u\Delta(M_0)$ in $\uM$.
	Then we have the following: 
\begin{enumerate}
		\item $\uM_0$ is an $\A^3$-bundle over $\P^{n-2}$.
		\item $\u\Delta(M)$ is a smooth quadric surface bundle over $\P^{n-2}$.
		\item $\u\Delta(M)$ is linearly equivalent to $2\kappa+2h^*(\O_{\P^{n-2}}(1))$. In particular, $\u\Delta(M_0)$ as a divisor in $\uM_0$ is linearly equivalent to $2h^*(\O_{\P^{n-2}}(1))$.
	\end{enumerate} 
\end{lemma}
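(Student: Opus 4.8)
The plan is to carry out the entire computation fiberwise over $\P^{n-2}$, after first making the projective bundle $h\colon \uM\to\P^{n-2}$ completely explicit. First I would normalize coordinates on $\P^n$ so that $p=[1:0:\cdots:0]$, $q=[0:1:0:\cdots:0]$, $\ell_{pq}=\{x_2=\cdots=x_n=0\}$, and $\P^{n-2}=\{x_0=x_1=0\}$. Each plane containing $\ell_{pq}$ is $\Pi_r=\langle p,q,r\rangle$ for a unique $r\in\P^{n-2}$, and in homogeneous coordinates $[X:Y:Z]$ on $\Pi_r$ (with $p,q$ at $[1:0:0],[0:1:0]$ and $\ell_{pq}=\{Z=0\}$) a conic through $p,q$ has the shape $cZ^2+dXY+eXZ+fYZ$. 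Tracking how $[c:d:e:f]$ transforms as $r$ moves identifies $\uM$ with $\P(\cC)$, where $\cC=\O(2)\oplus\O\oplus\O(1)\oplus\O(1)$ over $\P^{n-2}$: since $X,Y$ are the fixed coordinates of $p,q$ they are untwisted, while $Z$ is dual to the tautological subbundle of $\P^{n-2}$ and hence carries an $\O(1)$, so $Z^2,XY,XZ,YZ$ live in $\O(2),\O,\O(1),\O(1)$. Here $\kappa=\O_{\P(\cC)}(1)$. I would then note that the discriminant of the quadratic form factors, up to a constant, as $d\,(ef-cd)$; the factor $\{d=0\}$ is exactly the locus of conics divisible by $Z$, i.e. those containing $\ell_{pq}$, and $\{ef-cd=0\}$ is the remaining reducible component.

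For part (1), writing $\cC=\cC'\oplus\O$ with $\cC'=\O(2)\oplus\O(1)\oplus\O(1)$, the sub-bundle $\{d=0\}=\P(\cC')$ is obtained by dropping the $\O$-summand, so its complement $\uM_0=\uM\setminus\{d=0\}$ is the total space of the rank-$3$ vector bundle $\cC'$, hence an $\A^3$-bundle over $\P^{n-2}$. For part (2), on any fiber $\P^3$ the form $ef-cd$ has rank $4$, so each fiber of $\u\Delta(M)\to\P^{n-2}$ is a smooth quadric surface $\cong\P^1\times\P^1$; moreover the fiberwise partials $\partial_c=-d$, $\partial_d=-c$, $\partial_e=f$, $\partial_f=e$ cannot vanish simultaneously on $\P^3$, so the defining section is already smooth along the fibers. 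Thus $\u\Delta(M)$ is smooth over the smooth base $\P^{n-2}$, in particular a smooth quadric surface bundle.

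For part (3) I would read off divisor classes from the tautological inclusion $\O_{\P(\cC)}(-1)\hookrightarrow h^*\cC$. Projecting the tautological sub onto the four summands exhibits $c,d,e,f$ as sections of $\kappa\otimes h^*\O_{\P^{n-2}}(2)$, $\kappa$, $\kappa\otimes h^*\O_{\P^{n-2}}(1)$, $\kappa\otimes h^*\O_{\P^{n-2}}(1)$ respectively, so their divisor classes are $\kappa+2h^*\O_{\P^{n-2}}(1)$, $\kappa$, $\kappa+h^*\O_{\P^{n-2}}(1)$, $\kappa+h^*\O_{\P^{n-2}}(1)$. Hence both $ef$ and $cd$ lie in the class $2\kappa+2h^*\O_{\P^{n-2}}(1)$, so $ef-cd$ is a genuine global section of that line bundle and $\u\Delta(M)=\{ef-cd=0\}\sim 2\kappa+2h^*(\O_{\P^{n-2}}(1))$. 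Restricting to $\uM_0$, the section $d$ is nowhere zero and trivializes $\kappa|_{\uM_0}$, whence $\u\Delta(M_0)=\u\Delta(M)|_{\uM_0}\sim 2h^*(\O_{\P^{n-2}}(1))|_{\uM_0}$, the final assertion.

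The main obstacle I expect is the bookkeeping of the twists: pinning down which summand of $\cC$ carries which power of $\O(1)$, and confirming that $ef$ and $cd$ land in the \emph{same} line bundle so that $ef-cd$ is an honest section rather than a formal difference of sections of different sheaves. The geometric input forcing the asymmetric weights $(2,0,1,1)$ is that $X,Y$ are untwisted (they are the fixed points $p,q$) while $Z$ sees the $\O(1)$-twist of the base; once this is correct, the rest is a short computation. A secondary point to check carefully is that the discriminant really factors as $d\,(ef-cd)$ and that the spurious factor $\{d=0\}$ coincides, scheme-theoretically, with the conics containing $\ell_{pq}$, so that passing to $\uM_0$ removes precisely that factor and nothing more.
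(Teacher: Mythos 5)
Your proof is correct. For parts (1) and (2) it is essentially the paper's own argument: the same fiberwise normal form for conics through $p,q$ (the paper's $a_1x^2+a_2xy+a_3xz+a_4yz$ is your $cZ^2+dXY+eXZ+fYZ$ up to relabeling), and the same factorization of the discriminant into the linear factor $\{d=0\}$, cutting out the conics containing $\ell_{pq}$, times a rank-four quadric. Part (3) is where you take a genuinely different route. The paper reads off the coefficient $2$ of $\kappa$ from the fiberwise degree, writes $\u\Delta(M)\sim 2\kappa+c\,h^*(\O_{\P^{n-2}}(1))$, and then pins down $c=2$ by a test-curve computation: intersecting with the pencil of conics cut on a general quadric hypersurface $Q$ through $p,q$ by the planes spanned by $\ell_{pq}$ and the points of a general line $L\subset\P^{n-2}$, which contains exactly two reducible members, transversally. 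You instead make the bundle explicit, $\uM\cong\P(\O(2)\oplus\O\oplus\O(1)\oplus\O(1))$, and exhibit $ef-cd$ as a global section of $2\kappa+2h^*(\O_{\P^{n-2}}(1))$. Your route buys something the paper leaves implicit: ``the relative $\O(1)$'' of a projective bundle is only well defined up to twist by a pullback from the base, and the paper's count $2\,\kappa\cdot T+c=2$ silently uses that its test curve $T$ has $\kappa$-degree zero; in your setup this is transparent, since restricting $Q$ to the universal plane gives a global section of $\cC$ with component degrees $(2,0,1,1)$, so such a section pulls $\kappa$ back trivially. Conversely, the paper's argument is shorter and avoids your main burden, the bookkeeping of twists --- which you do get right: $X,Y$ untwisted, $Z$ dual to the tautological subbundle, hence $\sym^2$ weights $(2,0,1,1)$, so $ef$ and $cd$ land in the same line bundle and their difference is an honest section whose zero divisor is the (reduced, irreducible) quadric bundle $\u\Delta(M)$.
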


\proof The first two statements follows from computation of plane conics. Indeed, fix a plane in $\P^n$ containing $p,q$. Let $p=[0:1:0]$ and $q=[0:0:1]$. The plane conics through $p$ and $q$ is of the form:
\[
a_1 x^2 + a_2 xy + a_3 xz + a_4 yz = 0
\]
It is reducible if and only if either $a_4 =0$ or $a_1 a_4+a_2 a_3=0$. The first case corresponds to the locus parameterizing conics containing $\ell_{pq}$, while the second case does not. This proves statements (1) and (2).

From the above calculation, the divisor $\Delta(M)$ is linearly equivalent to $2\kappa+c \cdot h^*(\O_{\P^{n-2}}(1))$ for some coefficient $c$. To determine $c$, we constrct a testing curve, and check its intersection numbers with $\Delta(M)$ as follows. We pick a general line $L$ on $\P^{n-2}$, and a general smooth quadric hypersurface $Q$ in $\P^{n}$ containing $p,q$. For any point $t$ on $L$, the plane $H_{pqt} \cong \P^2$ spanned by the three points $p,q,$ and $t$ intersects $Q$ at a conic $C_t$ through $p,q$. Furthermore, $\{C_t\}$ is a pencil of conics lying on the quadric surface which is the intersection of $Q$ and the span of $\ell_{pq}$ and $L$. Therefore, there are two reducible conics in this pencil. Furthermore, this intersection is transversal because the testing pencil is free in $M$, i.e., it can pass through a general point of $M$.\qed

\begin{proposition}\label{prop:deg2}
	The smooth divisor $\u\Delta'\subset \uF'$ is cut out by a homogeneous polynomial of degree two.
\end{proposition}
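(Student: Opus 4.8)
The plan is to realize $\u\Delta'$ as the restriction to $\uF'$ of the reducible-conic divisor on the ambient space of all conics through $p,q$, and then read off its class from Lemma \ref{lem:deg2}. First I would record that for a general pair $(p,q)$ the line $\ell_{pq}$ is not contained in $\uX$ by Lemma \ref{lem:no-bad-line}; consequently no conic lying on $\uX$ through $p,q$ can contain $\ell_{pq}$, so $\uF'$ is a closed subscheme of the open locus $\uM_0\subset\uM$ of conics avoiding $\ell_{pq}$. By Pan's embedding theorem \cite[Proposition 4.3]{Pan}, the map $\varphi=h|_{\uF'}:\uF'\to\P^{n-2}$ is a closed embedding, and it is by construction the restriction of the plane-map $h:\uM_0\to\P^{n-2}$. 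Thus $\uF'$ sits inside the $\A^3$-bundle $\uM_0\to\P^{n-2}$ (Lemma \ref{lem:deg2}(1)) as a section over the subvariety $\varphi(\uF')$, and $\varphi^*\O_{\P^{n-2}}(1)=\O_{\uF'}(1)$.

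Next I would identify $\u\Delta'$ with $\u\Delta(M_0)\cap\uF'$. Set-theoretically this is clear: a conic parametrized by $\uF'$ is reducible (and, as noted, automatically does not contain $\ell_{pq}$) exactly when it lies on $\u\Delta(M_0)$. Since a general conic on $\uX$ through $p,q$ is irreducible, $\uF'$ is not contained in $\u\Delta(M_0)$, so $\u\Delta(M_0)|_{\uF'}$ is a well-defined effective Cartier divisor supported on $\u\Delta'$, say $\u\Delta(M_0)|_{\uF'}=m\cdot\u\Delta'$ with $m\ge 1$. To pin down $m=1$ I would verify that $\uF'$ meets $\u\Delta(M_0)$ transversally along a general point of $\u\Delta'$: the discriminant $a_1a_4+a_2a_3$ computed in the proof of Lemma \ref{lem:deg2} has nonvanishing differential in the direction that smooths the node of a reducible conic, and this direction is tangent to $\uF'$ because such a reducible conic deforms within $\uX$ through $p,q$ to nearby irreducible conics (this is exactly the statement that $\u\Delta'$ is a proper smooth divisor in the smooth scheme $\uF'$, Lemma \ref{lem:Kont-log-smooth}). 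Transversality at one point of the smooth, hence reduced, divisor $\u\Delta'$ forces $m=1$.

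With the multiplicity fixed, Lemma \ref{lem:deg2}(3) finishes the argument: in $\uM_0$ one has $\u\Delta(M_0)\sim 2h^*\O_{\P^{n-2}}(1)$, whence
\[
\O_{\uF'}(\u\Delta')=\O_{\uM_0}(\u\Delta(M_0))|_{\uF'}\cong\bigl(h^*\O_{\P^{n-2}}(2)\bigr)|_{\uF'}=\varphi^*\O_{\P^{n-2}}(2)=\O_{\uF'}(2).
\]
Concretely, $\u\Delta(M)$ is globally the zero locus of the discriminant, a quadratic expression in the conic coefficients; restricting this form along the section $\uF'\subset\uM_0$ exhibits $\u\Delta'$ as the intersection of $\uF'\subset\P^{n-2}$ with a degree-two hypersurface, which is the assertion.

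I expect the transversality/multiplicity-one step to be the main obstacle, since the abstract linear equivalence only yields $\u\Delta'\in|\O_{\uF'}(2)|$ up to the factor $m$; once this is settled, everything else is formal given Pan's embedding and Lemma \ref{lem:deg2}. A secondary point to handle carefully is the compatibility of the two models of $\P^{n-2}$ used in the construction (the one disjoint from $\ell_{pq}$ defining $h$, and $T_p\uX\cap T_q\uX$ defining $\varphi$); both are canonically identified with the space of $2$-planes containing $\ell_{pq}$, so the identity $h|_{\uF'}=\varphi$ holds as used above.
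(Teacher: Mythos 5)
Your overall route is the same as the paper's. The paper's own proof considers exactly the diagram you describe, relating $\u\Delta'\subset\uF'$ to $\u\Delta(M_0)\subset\uM_0$ over $\P^{n-2}$, asserts that the square
$\u\Delta' \to \uF'$, $\u\Delta(M_0)\to\uM_0$ is Cartesian, and then invokes Lemma \ref{lem:deg2}(3); your multiplicity statement $\u\Delta(M_0)|_{\uF'}=m\cdot\u\Delta'$ with $m=1$ is precisely that Cartesian claim, which the paper in fact asserts without argument. So you have correctly isolated the one nontrivial point, and your reduction of everything else to Pan's embedding and Lemma \ref{lem:deg2} matches the paper.

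However, your justification of $m=1$ is circular as written. That a reducible conic in $\uF'$ deforms inside $\uX$ through $p,q$ to irreducible conics (equivalently, that $\u\Delta'$ is a proper divisor in the smooth scheme $\uF'$) is a set-theoretic statement: it produces a tangent vector $v\in T\uF'$ with $v\notin T\u\Delta'$, but it does not show that the discriminant $g=a_1a_4+a_2a_3$ satisfies $dg(v)\neq 0$. A regular function can vanish exactly on a smooth divisor and still vanish there to order two (e.g.\ $x^2$ on $\A^2$ vanishes precisely on $\{x=0\}$), and ruling this out for $g|_{\uF'}$ is exactly the assertion $m=1$; the identification ``points off $\u\Delta'$'' $=$ ``$dg\neq 0$'' is only legitimate after transversality is known. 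The missing ingredient is a first-order identification of the two node-smoothing parameters. Near a point of $\u\Delta'$ every stable map in $\uF'$ is an embedding of a nodal conic (the two lines are distinct by Lemma \ref{lem:no-bad-line}), so the node of the domain curve is identified with the node of the image conic, and first-order deformations of one are first-order deformations of the other. The local equation of $\u\Delta'$ in $\uF'$ is, up to a unit, the smoothing parameter of the domain node: this is what the smoothness of $\uF'\to\u\fM_{0,2}$ gives (Lemma \ref{lem:log-conic-space}, which underlies \cite[Lemma 5.1]{dJS}). On the other side, $g$ is a reduced equation of $\u\Delta(M_0)$, since the quadric bundle $\u\Delta(M)$ is smooth by Lemma \ref{lem:deg2}(2), so $g$ is, up to a unit, the smoothing parameter of the image node. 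Under the embedding $\uF'\hookrightarrow\uM_0$ these two parameters therefore agree up to a unit, so $g|_{\uF'}$ vanishes to order exactly one along $\u\Delta'$, i.e.\ $m=1$. With this substitution for your transversality step, your proof is complete and coincides with the paper's.
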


\proof This is proved in \cite[Theorem 6.17]{Pan}. Here we present a simple proof. Consider the commutative diagram
	$$\xymatrix{
		\u\Delta' \ar[d] \ar[r]^{} &\uF' \ar[d] \ar[rd]^\varphi&\\
	\u\Delta(M_0)\ar[r]	&\uM_0\ar[r]^h & \P^{n-2}}$$
where the left square is Cartesian. Since $\u\Delta(M_0)=h^*(\O_{\P^{n-2}}(2))$, so is $\u\Delta'$.\qed

\subsection{Proof of Theorem \ref{thm:A1-conic-moduli}}
\begin{proposition}\label{prop:deg1}
	The smooth divisor $\u\Delta\subset \uF$ is cut out by a linear form in $\P^{n-2}$.
\end{proposition}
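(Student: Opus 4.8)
The plan is to combine three facts already established, and to extract the desired linear form from the quadric cutting out the boundary on the space of usual stable maps. By Proposition \ref{prop:deg2} the reducible locus $\u\Delta'\subset\uF'$ is the zero locus of a quadratic form $\Psi$ on $\P^{n-2}$, under the closed embedding $\varphi\colon\uF'\hookrightarrow\P^{n-2}$. By Proposition \ref{prop:boundary-pullback} the forgetful embedding $\u\phi\colon\uF\hookrightarrow\uF'$ satisfies $\u\phi^*[\u\Delta']=2[\u\Delta]$. Composing gives a closed embedding $\uF\hookrightarrow\P^{n-2}$ for which $\Psi|_{\uF}$ is a section of $\cO_{\uF}(2)$ whose associated divisor is exactly $2\u\Delta$; it is nonzero since $\u\Delta$ is nonempty by Lemma \ref{lem:reducible-conic}, and since $\uF$ is smooth with $\u\Delta$ a smooth (in particular reduced) divisor by Lemma \ref{lem:fiber-smoothness}, we obtain $\cO_{\uF}(\u\Delta)^{\otimes2}\cong\cO_{\uF}(2)$. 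Finally, by Proposition \ref{prop:image-Delta} the node of every reducible conic parameterized by $\u\Delta$ lies on $\uD$; this ``node on $\uD$'' condition is precisely the last, linear equation in the complete intersection presentation of $\u\Delta$, and under the node identification it is the restriction to $\P^{n-2}$ of a linear form $L$ vanishing along $\u\Delta$.

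With these in place the argument is short. Assuming $L|_{\uF}\not\equiv0$, write $\operatorname{div}(L|_{\uF})=\u\Delta+E$ for an effective $E\ge0$. Squaring and comparing in $\pic(\uF)$ gives $\cO_{\uF}(2)\cong\cO_{\uF}(1)^{\otimes2}\cong\cO_{\uF}(2\u\Delta)\otimes\cO_{\uF}(2E)$, while the first paragraph gives $\cO_{\uF}(2)\cong\cO_{\uF}(2\u\Delta)$; hence $\cO_{\uF}(2E)\cong\cO_{\uF}$. Taking degrees against the very ample class $\cO_{\uF}(1)$ forces $\deg E=0$, so $E=0$ and $\operatorname{div}(L|_{\uF})=\u\Delta$. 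Equivalently, $\u\Delta=\uF\cap\{L=0\}$ is cut out by the single linear form $L$, as claimed. The same computation can be phrased numerically: $\operatorname{div}(\Psi|_{\uF})=2\u\Delta$ yields $\deg\uF=\deg\u\Delta$, so the hyperplane section $\operatorname{div}(L|_{\uF})$ and $\u\Delta$ are effective divisors of equal degree with $\u\Delta\le\operatorname{div}(L|_{\uF})$, and must coincide.

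The point that needs genuine care, and which I expect to be the main obstacle, is the nonvanishing $L|_{\uF}\not\equiv0$, i.e.\ that $\uF$ is not contained in the hyperplane $\{L=0\}$. Geometrically this says that the spanning plane of a general irreducible $\A^1$-conic in $\uF$ does not meet $\P^{n-2}$ in a point of $\uD$, which is plausible for a general member but must be checked, because $L$ is defined a priori only through the node identification on the boundary $\u\Delta$ and one must verify that its linear extension to all of $\P^{n-2}$ restricts nontrivially to $\uF$. I would settle this by exhibiting a single $\A^1$-conic in $\uF$ whose plane-point lies off $\uD$; once that is in hand, the degree and line-bundle comparison above closes the argument with no further computation.
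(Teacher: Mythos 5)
There is a genuine gap here, and you have named it yourself: the nonvanishing $L|_{\uF}\not\equiv 0$. This is not a deferrable routine check; it is the crux. Since $\u\Delta\subset\uF$, every linear form vanishing on $\uF$ automatically vanishes on $\u\Delta$, so what you must produce is a linear form in the ideal of $\varphi(\u\Delta)$ that is \emph{not} in the ideal of $\varphi(\uF)$, i.e.\ you must show the linear span of $\u\Delta$ in $\P^{n-2}$ does not contain $\uF$. Dimension counting cannot do this (for $d_i\ge 2$, $\dim\uF$ lies well below the dimension of that span), and the assertion that $\uF$ spans a strictly larger linear space than $\u\Delta$ is part of the conclusion of Theorem \ref{thm:A1-conic-moduli} itself (the two complete intersection types differ by exactly one linear form), so any argument for it must be made carefully to avoid circularity. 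Worse, your plan for closing the gap rests on a misidentification of maps. The complete intersection presentation in Proposition \ref{prop:image-Delta} is taken with respect to the spanning-plane embedding $\varphi_\Delta\circ u_\Delta$, which on $\u\Delta$ is the node map followed by the linear projection away from $\ell_{pq}$ onto $\P^{n-2}$. The linear form $G$ cutting out $\uD$ does \emph{not} descend along this projection, because $G(p)\neq 0$ and $G(q)\neq 0$, so $G$ does not vanish on the center $\ell_{pq}$ of the projection; consequently ``the node lies on $\uD$'' is not the pullback of any linear form on $\P^{n-2}$, and $\u\Delta$ is not contained in the hyperplane $\{G|_{\P^{n-2}}=0\}$ of plane-points lying on $\{G=0\}$. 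So exhibiting one $\A^1$-conic whose plane-point lies off $\uD$ would prove nothing about a linear form that actually vanishes along $\u\Delta$.

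For contrast, the paper's proof avoids your $L$ entirely: it is the one-line divisor-class computation that Propositions \ref{prop:boundary-pullback} and \ref{prop:deg2} give $2[\u\Delta]=\phi^*[\u\Delta']=\cO_{\uF}(2)$ in $\pic(\uF)$, and it divides this relation by two, the absence of torsion and the lifting of the resulting section of $\cO_{\uF}(1)$ to an honest linear form on $\P^{n-2}$ being supplied by the structure results of Pan on $\uF'$ that the paper leans on throughout this section. Your first paragraph correctly reproduces this starting point, and your $E=0$ comparison is a legitimate (and rather nice) way to sidestep the $2$-torsion issue that the division-by-two glosses over -- but it trades that issue for the nonvanishing of $L|_{\uF}$, which you do not prove and which carries the real content of the proposition.
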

\proof This follows from Proposition \ref{prop:boundary-pullback} and \ref{prop:deg2}.\qed

\proof[Proof of Theorem \ref{thm:A1-conic-moduli}] By \cite[Proposition 4.3]{Pan}, Proposition \ref{prop:image-Delta}, and \ref{prop:deg1}, we have:
\begin{itemize}
	\item $\uF\subset \P^{n-2} \subset \P^n$ is a smooth projective variety.
	\item $\u\Delta=W\subset\P^{n-2} \subset \P^n$ is a complete intersection of type 	$$(1,1,\cdots,d_1-1,d_1-1,d_1,\cdots,1,1,\cdots,d_c-1,d_c-1,d_c,1).$$
	\item $\u\Delta$ is cut out by a linear form on $\uF$.
\end{itemize}
Now the theorem follows from \cite[Proposition 7.1]{Pan}.\qed

\section{Proof of Theorem \ref{thm:main} and its corollaries}\label{sec:proof-conclude}

\proof[Proof of Theorem \ref{thm:main}] Since affine spaces satisfy strong approximation \cite[Theorem 6.13]{Rosen2002}, it remains to prove under Assumption \ref{not:deg1pair}. By Theorem \ref{thm:A1-conic-moduli} and the hypothesis on the degree, the general fiber of the evaluation morphism 
$$
\u{ev}:\u\cA_{2}(X,2\alpha)\to \uX\times \uX
$$ is a smooth Fano complete intersection. Therefore, the general fiber is rationally connected by \cite{Campana, KMM}. On the other hand, we know that weak approximation holds for $X$ \cite{dJS,Hassett-WA}. Now the theorem follows from Theorem \ref{thm:SA}.\qed

\proof[Proof of Corollary \ref{cor:main}] For any smooth complete intersection pair $X=(\uX,\uD)$ in $\P^n$ with coordinate $[x_0:\cdots:x_n]$, we assume that 
$$\uX=\{F_1=F_2=\cdots=F_c=0\}$$ 
where $\deg F_i=d_i$ and the boundary $\uD=\{G=0\}$ where $\deg G=k$. The universal cover of $X\setminus D$ can be constructed in $\P^{n+1}$ with the coordinate $[x_0:\cdots:x_n:y]$ by taking the complete intersection
\begin{equation}\label{equ:universal-cover}
\uY=\{F_1=\cdots=F_c=y^k-G=0  \}
\end{equation} 
with the boundary divisor
\begin{equation}\label{equ:universal-cover-boundary}
\uE=\{y=0 \}.
\end{equation} 
We check that $(\uY,\uE)$ is smooth complete intersection pair in $\P^{n+1}$ of type $(d_1,\cdots,d_c,k;1)$. Furthermore the natural projection to $\P^n$ defined by $y=0$ gives a cyclic branched cover of degree $k$ over $X$. This yields the universal cover $Y \setminus E \to X\setminus D$. Now the corollary follows from Theorem \ref{thm:main}.\qed

\proof[Proof of Corollary \ref{cor:geom}] 
Let $Y = (\uY,\uE)$ be the universal cover constructed in (\ref{equ:universal-cover}) and (\ref{equ:universal-cover-boundary}). For each given point $x_i\in X^\circ$, $i=1,\cdots, m$, we may choose a lift of $y_i\in Y^\circ$. Since strong approximation holds for the constant family $\pi:Y\times \P^1\to \P^1$ away from $S=\{\infty \}$ by Corollary \ref{cor:main}, there exists an integral section curve $C$ passing through $(y_1,t_1),\cdots,(y_m,t_m)$, where $t_i$'s are distinct points on $\P^1-\{\infty \}$. The projection $p_1(C)$ gives an $\A^1$-curve on $Y$ passing through $y_1,\cdots,y_m$. Composing it with the map from $Y$ to $X$ gives the desired $\A^1$-curve.\qed


\bibliographystyle{amsalpha}             
\bibliography{myref}

\end{document}